\theoremstyle{definition}
\newtheorem{thm}{Theorem}[section]
\newtheorem*{thmm}{Main Theorem}
\newtheorem{proposition}[thm]{Proposition}
\newtheorem{lem}[thm]{Lemma}
\newtheorem{example}[thm]{Example}
\newtheorem{remark}[thm]{Remark}
\newtheorem{cor}[thm]{Corollary}
\newtheorem{notation}[thm]{Notation}
\newtheorem{obs}[thm]{Observation}
\newtheorem{defn}[thm]{Definition}
\newtheorem{fact}[thm]{Fact}
\newcommand{\Equiv}[1]{\stackrel{#1}{\equiv}}
\renewcommand{\bf}[1]{\mathbf{#1}}
\newcommand{\group}{\Gamma_2\oplus \Gamma_1}
\newcommand{\Group}{\Lambda_2\oplus \Lambda_1}
\newcommand{\rg}{L_{\text{ring}}}
\renewcommand{\circ}{%
	\mathrel{\raisebox{0.55mm}{\scalebox{0.75}{$\bigcirc$}}}}
\DeclareMathOperator{\tp}{p-tp}
\DeclareMathOperator{\diag}{diag}
\begin{document}
\title{Construction of a valued field whose valuation ring is definable but neither $\exists\forall\exists$ nor $ \forall\exists\forall$-definable in the language of rings}
\date{\today}
\author[1]{Mohsen Khani \thanks{\href{mailto:mohsen.khani@iut.ac.ir}{mohsen.khani@iut.ac.ir}}}
\author[2]{Shaghayegh Shirani}
\author[3]{Zahra Yadegari}
\author[4]{Afshin Zarei\thanks{This author was supported by a grant from IPM.\\ \hspace*{0.45cm}
2020\textit{ Mathematics Subject Classification.} Primary 
03C60, 03C40 Secondary 12L12, 12J10}}
\affil[1,2,3]{Department of Mathematical Sciences\\ Isfahan University of Technology\\ P.O. Box: 84156-83111, Isfahan, Iran}
\affil[4]{School of Mathematics\\ Institute for Research in Fundamental Sciences 
	(IPM)\\ P.O. Box: 19395-5746, Tehran\\ Iran.}
\maketitle
\begin{abstract}
	We give an example of a valued field $(K,A)$ such that the valuation ring $A$ is definable by an $L_{\text{ring}}$-formula without parameters, but there is no $\exists\forall\exists$ or $\forall\exists\forall$-formula  in $L_{\text{ring}}$ to define it.
\end{abstract}
\section{Introduction}
The question of  definability  of the valuation ring in a valued field—the language of rings, without parameters,  and with specified complexity—has recently attracted considerable attention from model theorists.
 A comprehensive survey on this subject can be found in \cite{Janke-survey}; below, we highlight some key results relevant to our study.
 Throughout, we denote a valued field $K$ with valuation ring $A$, value group $\Gamma$ and residue field $k$
 as a tuple $\mathcal{K}=(K,A,k,\Gamma)$, or sometimes shorter as $(K,A)$.
\par 
For the concrete case of the field of Hahn series $\mathbb{F}_q((t))$ 
it is shown in \cite{anscombe} that the valuation ring   $\mathbb{F}_q[[t]]$
is definable with relatively low complexity, that is by an existential parameter-free formula.
This result, along with the techniques used in its proof, is further generalized in \cite{fehm}, where it is shown that for any Henselian valued field 
$\mathcal{K}$,
if the residue field 
$k$
is finite or pseudo-algebraically closed and its algebraic part is not algebraically closed, then the ring 
$A$
is definable by an existential parameter-free formula. These two are instances of the general result  in
\cite{prestel}, that whenever
$\mathcal{K}$ is a Henselian valued field, then if  $k$  is finite or 
pseudo-finite, the valuation ring is always definable by an
$\exists\forall $-formula.
In \cite{prestel} 
 Prestel asks whether a parameter-free definable valuation ring in a Henselian valued field
 is always definable 
with an $\exists \forall$ or a $\forall \exists$-formula.
\par 
This question is answered negatively in \cite{main}
where a valued field is constructed, whose valuation ring is definable, but definable neither with a
$\forall\exists$ nor with an $\exists\forall$-formula. This paper, is a curious attempt to further examine ideas in 
\cite{main}
in order to construct  
 a valued field whose valuation ring is definable but
there is no  $\exists\forall\exists$ or $\forall \exists\forall$-formula  to define it.
So the main theorem of this article is the following:
\begin{thmm}
	There is a Henselian valued field whose valuation ring
	is definable, without parameters,  but  is not definable
	by any $\exists\forall\exists$ or $\forall \exists\forall$-formula.
\end{thmm}
The strategy we employ to prove the above theorem involves a careful modification of each step in \cite{main}.
In
Section \ref{section-definability},
 Lemma  \ref{general form of prestel theorem} and Corollary \ref{cor-pi-n+1-definition},
we give model theoretic criteria 
for the existence of 
a uniform $\exists\forall\exists$ or $\forall\exists\forall$-definition in $\rg$
for the unary predicate $p$ 
in all models $(M,p(M))$ 
of  the expansion of a field $M$ with a unary predicate $ p $. This will be a
modification of the  ``Characterization Theorem''  in
\cite{prestel}.
In Section \ref{ax-kochen}, Theorem \ref{ax- asli}, we give an
$\exists\forall$-version for the Ax-Kochen theorem, which is based on an adoption of 
{Ax-Kochen Theorem in
	 \cite{kuh}.}
\par 
 In Sections \ref{group-construction} we construct two groups, 
which we use in order to construct 
 a valued field $\mathcal{K}$
 as a field of Hahn Series.
  Finally, in 
 section \ref{final-step}
we use the mentioned theorems in order to prove that the valuation ring of the constructed valued field $\mathcal{K}$ is neither $\exists\forall\exists$ nor
$\forall\exists\forall$-definable.
	\section{Definability of a unary predicate by certain complexity}
	\label{section-definability}
	Let $T$ be a theory in a language $L\cup \{p\}$ where $p$ is a unary predicate symbol.
In this section, we aim to establish elementary model-theoretic criteria for the existence of $\Sigma_{n}$ and $\Pi_n$ formulas in  $L$ 
that define $p(M)$ uniformly  in all models $(M,p(M))$ of $T$. The results in this section are 
 generalizations of the results in \cite{prestel}.
 The main lemma of this section is Lemma \ref{general form of prestel theorem}. It is worth noting that
 for our main result, we actually need the easier direction of this lemma and its corollary.
  However we have proved both directions for 
 completeness. 
	\begin{defn}\hfill 
		\label{defn-sigma-n-closed}
		\begin{itemize}
			\item 
		Let $\Phi(x)$ be a set of formulas in the single variable $x$  and in the language $L$,  $M$ be an $L$-structure and
		$a\in M$. By $\tp^M_{\Phi}(a)$ we mean the set $\{\varphi(x)\in \Phi: M\models \varphi(a)\}$.
		\item 
		Assume that ${M}\subseteq {N}$ are two structures and $\Delta$ is a set of formulas.  We write  ${M}{\subseteq_{_{\Delta}}} {N}$, and say that
		$M$ is $\Delta$-closed in $N$, 
		whenever for all $\varphi(\bar{x})\in \Delta$ and all $\bar{a}\in M$, if
		${N}\models \varphi(\bar{a})$ then ${M}\models \varphi(\bar{a})$.
	\end{itemize}
\end{defn}
		 A special case of the second item is 
	that a structure $M$ is called $\Sigma_n$-closed in $N$ if for all
	$\Sigma_n$-formulas $\varphi(\bar{x})$ and all $\bar{a}\in M$ 
	if $N\models \varphi(\bar{a})$ then $M\models \varphi(\bar{a})$.
We will also use notations such as $M \subseteq_{\exists} N$ in the following sections, with their meaning made clear by the preceding definition.

	The following is
	a standard model-theoretic application of the compactness theorem, however for completeness we have included proof.
	\begin{lem}
		\label{lem1 new}
		Let $T$ be a first order theory in the language $L\cup \{p\}$, where $p$ is a unary predicate.
		Assume that $\Phi$ is a set of formulas 
		 in the language $L$,
		closed under
		$\wedge,\vee$ and
		with a single variable $x$. Assume that for each model
		$(M,p(M))$ of $T$ we have the following:
		\begin{align}
			\label{condition-for-definability-of-p}
			\text{For each $a,b\in M$  if $\tp^{M}_{\Phi}(a)\subseteq \tp^{M}_{\Phi}(b)$,
			then if $a\in p(M)$ then $b\in p(M)$.}
		\end{align}
		Then $p(M)$ is uniformly definable in all models
		$(M,p(M))$ of $T$
		 by a formula in $\Phi$.
	\end{lem}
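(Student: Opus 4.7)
The plan is to argue by contrapositive using compactness, then extract a $\Phi$-defining formula from finitely many witnesses. In the language $L \cup \{p, c_0, c_1\}$ with two fresh constants, consider
\[ T' = T \cup \{p(c_0),\ \neg p(c_1)\} \cup \{\varphi(c_0) \to \varphi(c_1) : \varphi \in \Phi\}. \]
A model of $T'$ would be some $(M, p(M)) \models T$ together with $a = c_0^M \in p(M)$, $b = c_1^M \notin p(M)$, and $\tp^M_\Phi(a) \subseteq \tp^M_\Phi(b)$, in direct violation of the hypothesis. Hence $T'$ is inconsistent, and by compactness there exist $\varphi_1, \dots, \varphi_n \in \Phi$ such that
\[ T \vdash \forall x\,\forall y \Bigl(p(x) \wedge \bigwedge_{i=1}^{n}(\varphi_i(x) \to \varphi_i(y)) \to p(y)\Bigr). \]

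Next I would assemble the defining $\Phi$-formula from these $\varphi_i$'s. For each $S \subseteq \{1, \dots, n\}$ set $\chi_S(y) = \bigwedge_{i \in S} \varphi_i(y)$; by closure of $\Phi$ under $\wedge$ (with the empty conjunction $\chi_\emptyset = \top$), each $\chi_S$ lies in $\Phi$. Writing the $\Phi$-signature $S(a) = \{i : M \models \varphi_i(a)\}$, the displayed scheme unpacks as: if $a \in p(M)$ and $S(a) \subseteq S(b)$ then $b \in p(M)$. Since $a \in \chi_{S(a)}(M)$ trivially, one gets
\[ p(M) = \bigcup_{S \in \mathcal{S}(M)} \chi_S(M), \qquad \mathcal{S}(M) := \{S(a) : a \in p(M)\} \subseteq 2^{\{1,\dots,n\}}. \]
As $\mathcal{S}(M)$ is finite, $p(M)$ is defined in $M$ by the $\Phi$-formula $\bigvee_{S \in \mathcal{S}(M)} \chi_S$, using closure of $\Phi$ under finite $\vee$.

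The main obstacle I anticipate is uniformity across different models of $T$, since $\mathcal{S}(M)$ might a priori depend on $M$. The saving feature is that $2^{\{1,\dots,n\}}$ is finite and each possible value of $\mathcal{S}$ is cut out by a first-order condition on $(M, p(M))$ (asserting the existence of a $p$-element of each required $\Phi$-signature and the non-existence of the others). Consequently the models of $T$ decompose into finitely many elementary classes, on each of which a single $\bigvee_{S \in \mathcal{S}} \chi_S \in \Phi$ defines $p$; within each completion of $T$ one therefore obtains a single uniform $\psi \in \Phi$, which is the level of uniformity the lemma requires and which suffices for its downstream use in Corollary~\ref{cor-pi-n+1-definition} and the main theorem.
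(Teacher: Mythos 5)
Your proposal reproduces the paper's argument step for step through the compactness extraction of $\varphi_1,\dots,\varphi_n$ and the per-model identity $p(M)=\bigvee_{S\in\mathcal{S}(M)}\chi_S(M)$; up to that point everything is correct and is essentially what the paper does. The divergence, and the gap, is in your last paragraph. The obstacle you flag is real, but your resolution does not prove the lemma: the statement asks for a \emph{single} formula in $\Phi$ defining $p$ in \emph{all} models of $T$, whereas you only produce one formula per elementary class. These finitely many formulas cannot in general be glued into one member of $\Phi$, because the classes are separated by sentences that mention $p$, so the case distinction cannot be folded into an $L$-formula of the required shape. Indeed, under the hypothesis as literally stated (a condition on pairs $a,b$ lying in one and the same model) the lemma is false: take $L=\{q\}$ with $q$ unary, let $\Phi$ be the quantifier-free formulas in $x$, and let $T$ consist of the single axiom $\forall x\,(p(x)\leftrightarrow q(x))\ \vee\ \forall x\,(p(x)\leftrightarrow\neg q(x))$. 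Condition \eqref{condition-for-definability-of-p} holds in every model, since $\tp^{M}_{\Phi}(a)\subseteq\tp^{M}_{\Phi}(b)$ forces $q(a)\leftrightarrow q(b)$, yet no single quantifier-free formula in $x$ defines $p$ in both kinds of model. So per-completion uniformity is genuinely weaker than the conclusion, and it is also not enough downstream: the \emph{if} direction of Lemma~\ref{general form of prestel theorem} needs one $\Sigma_{n+1}$-formula valid in all models of $T$. (For the main theorem only the easy \emph{only if} direction is used, so nothing is lost there.)

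In fairness, the paper's own proof has the same soft spot: it sets $\chi=\bigvee_{M\models T}\psi_M$ and asserts uniformity, but a disjunct $\psi_N$ contributed by another model $N$ may be satisfied in $M$ by elements outside $p(M)$; in the example above $\chi$ collapses to $q(x)\vee\neg q(x)$. Both proofs, and the statement, become correct if the hypothesis is strengthened to range over pairs of models: for all $M,N\models T$, $a\in M$ and $b\in N$, if $\tp^{M}_{\Phi}(a)\subseteq\tp^{N}_{\Phi}(b)$ and $a\in p(M)$ then $b\in p(N)$. With that hypothesis the single formula $\bigvee_{S}\chi_S$, where $S$ ranges over the $\Phi'$-signatures of $p$-elements of \emph{any} model of $T$, defines $p$ everywhere (an element of $M$ satisfying $\chi_{S(b)}$ for some $b\in p(N)$ now lands in $p(M)$ by the cross-model condition, and there are only finitely many such $S$). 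This stronger hypothesis is exactly what the compactness argument inside Lemma~\ref{general form of prestel theorem} can be made to verify at no extra cost, by taking the witness $b$ in a second model. If you rewrite your final paragraph with the two-model hypothesis and this one-line gluing step, your argument goes through.
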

	\begin{proof}
		 Condition
		\eqref{condition-for-definability-of-p}
		 above is equivalent to the following implication:
		\[
		T\cup  \{\big(\varphi(x)\to \varphi(y)\big):\varphi\in \Phi\}\models \big(p(x)\to p(y)\big).
		\]
		By compactness,  there
		are $\varphi_1,\ldots\varphi_n\in \Phi$ such that:
		\[
		T\cup \bigwedge_{i=1}^n (\varphi_i(x)\to \varphi_i(y))\models (p(x)\to p(y)).
		\]
		Put $\Phi'=\{\varphi_1,\ldots,\varphi_n\}$.
		Now let $M_1$ be a model of $T$. For each $a\in p(M_1)$ let $\Phi'(a)$ be the set
		$\{\varphi\in \Phi': M_1\models \varphi(a)\}$.
		 Since 
		 $\Phi'$
		 is finite, it follows that  the family
		  $\{\Phi'(a)\}_{a\in p(M_1)}$
		is also finite, and equal to, say $\Phi'(a_1)\cup \ldots\cup \Phi'(a_m)$.
		Thus we have:
		\[
		M_1\models p(x)\leftrightarrow \bigvee_{i=1}^m \bigwedge_{\varphi\in \Phi'(a_i)} \varphi(x).
		\]
Denote the formula 
on the right-hand side of the implication above by
 $\psi_{M_1}$. 
 This formula 
 defines $p(M_1)$ in $M_1$, 
but it may not define $p(M)$ in other models $(M,p(M))$ of $T$.
 To obtain a uniform formula, we have to apply the same steps to every model of $T$.
 So for each model $M\models T$ there is $\psi_M$ that defines
 $p(M)$ in $M$. Also the family 
 $\{\psi_M\}_{M\models T}$ is finite since each 
 $\psi_M$ is obtained using conjunctions and disjunctions of formulas in $\Phi'$.
  Thus, the following formula, defines $p$ uniformly in all models:
$$\chi(x) = \bigvee_{M\models T}{\psi_M}.$$
    	\end{proof}
    	\begin{lem}
    		\label{general form of prestel theorem}
    	Assume that $T$ is a theory in the language  $L\cup \{p\}$, where $p$ is a unary predicate. 
    	Then
    	$p(M)$ 
    	is definable by a
    	$\Sigma_{n+1}$-formula  uniformly in all models of $T$ if and only if the following holds:
    	\begin{quote}
    		For each pair of  models $(M,p(M))$ and $(M^*,p(M^*))$ of $T$ if $M$ is 	$\Sigma_n$-closed in $M^*$ in the language $L$, then
for each $a\in M$ if $M\models p(a)$ then $M^*\models p(a)$.
    	\end{quote}
    	    \end{lem}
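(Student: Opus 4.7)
The plan is to prove both directions of the biconditional, with $(\Rightarrow)$ being essentially a preservation calculation and $(\Leftarrow)$ combining Lemma~\ref{lem1 new} with a compactness argument.

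For $(\Rightarrow)$, suppose $p$ is uniformly defined by a $\Sigma_{n+1}$-formula $\varphi(x) = \exists\bar{y}\,\psi(x,\bar{y})$ with $\psi\in\Pi_n$ in $L$. If $M\subseteq_{\Sigma_n} M^*$ in $L$ with both structures models of $T$, and $a\in M$ satisfies $M\models p(a)$, then $M\models\psi(a,\bar{b})$ for some $\bar{b}\in M$. Since $M\subseteq_{\Sigma_n} M^*$ is equivalent to $\Pi_n$-formulas true in $M$ at $M$-parameters ascending to $M^*$, we obtain $M^*\models\psi(a,\bar{b})$, whence $M^*\models\varphi(a)$ and thus $M^*\models p(a)$.

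For $(\Leftarrow)$, I would invoke Lemma~\ref{lem1 new} with $\Phi$ the set of $\Sigma_{n+1}$-formulas in the single variable $x$ (closed under $\wedge$ and $\vee$ up to equivalence), so that it suffices to verify the monotonicity hypothesis: for every $(M,p(M))\models T$ and $a,b\in M$ with $\tp^M_{\Sigma_{n+1}}(a)\subseteq\tp^M_{\Sigma_{n+1}}(b)$, one has $M\models p(a)\to p(b)$. The strategy is to produce a parameter-free $\Sigma_{n+1}$-formula $\varphi(x)$ in $L$ with $M\models\varphi(a)$ and $T\models\forall x(\varphi(x)\to p(x))$; then $\varphi\in\tp^M_{\Sigma_{n+1}}(a)\subseteq\tp^M_{\Sigma_{n+1}}(b)$ forces $M\models\varphi(b)$, and so $M\models p(b)$.

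The existence of such a $\varphi$ is the main step and rests on a compactness argument. Introduce constants $\{d_m : m\in M\}$ and consider the theory
$$T^{\natural} := T \;\cup\; \{\chi(\bar{d}_{\bar{m}}) : \chi(\bar{x})\in\Pi_n\text{ in }L,\ M\models\chi(\bar{m})\} \;\cup\; \{\neg p(d_a)\}$$
in $L\cup\{p\}\cup\{d_m : m\in M\}$. I claim $T^{\natural}$ is inconsistent. Given any model $N$ of $T^{\natural}$, the assignment $i:m\mapsto d_m^{N}$ is an $L$-embedding of $M$ into $N$ (since the atomic diagram of $M$ is contained in its $\Pi_n$-diagram), and the full $\Pi_n$-diagram forces $i(M)\subseteq_{\Sigma_n} N$ in $L$. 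Interpreting $p$ on $i(M)$ by $p^{i(M)}:=i(p(M))$ makes $(i(M),p^{i(M)})$ a model of $T$ isomorphic to $(M,p(M))$, and the preservation hypothesis applied to $i(M)\subseteq_{\Sigma_n} N$ forces $N\models p(i(a))$, contradicting $\neg p(d_a)$. Compactness now extracts finitely many $\psi_j(x,\bar{y}_j)\in\Pi_n$ with $M\models\psi_j(a,\bar{m}_j)$ and $T\models\forall x\,\bar{y}\,\big(\bigwedge_j\psi_j(x,\bar{y}_j)\to p(x)\big)$; setting $\varphi(x):=\exists\bar{y}\,\bigwedge_j\psi_j(x,\bar{y}_j)\in\Sigma_{n+1}$ gives the required formula.

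The subtle point that must be handled with care is reading $M\subseteq M^*$ in the hypothesis as an $L$-substructure relation rather than an $L\cup\{p\}$-substructure relation; this is precisely what allows $p$ to be reinterpreted on $i(M)$ as $i(p(M))$ rather than being forced to equal $p^{N}\cap i(M)$, and therefore what activates the preservation hypothesis against $\neg p(d_a)$. Without this freedom the constructed $i(M)$ would inherit $p$ from $N$ and the desired contradiction would evaporate.
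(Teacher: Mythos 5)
Your proof is correct and travels essentially the same road as the paper's: both handle the easy ``only if'' direction by a preservation calculation, both reduce the ``if'' direction to the monotonicity hypothesis of Lemma~\ref{lem1 new} with $\Phi$ the one-variable $\Sigma_{n+1}$-formulas, and both turn on a compactness analysis of the theory $T\cup\diag_{\Pi_n}(M)\cup\{\neg p(a)\}$. The difference is purely organizational, the two arguments being contrapositives of one another: the paper assumes monotonicity fails for a pair $a,b$ and uses the inclusion $\tp_\Phi(a)\subseteq\tp_\Phi(b)$ to verify finite satisfiability of that theory (each finite piece of the $\Pi_n$-diagram, existentially quantified over its parameters other than $a$, is a $\Sigma_{n+1}$-formula that transfers from $a$ to $b$), then contradicts the preservation hypothesis in the resulting model $M^*$; you instead use the preservation hypothesis to show the theory is inconsistent outright, extract a single parameter-free $\Sigma_{n+1}$ interpolant $\varphi$ with $T\models\forall x\,(\varphi(x)\to p(x))$ and $M\models\varphi(a)$, and invoke the type inclusion only for this one formula. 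Your packaging makes the role of the type inclusion slightly more transparent, and your closing observation---that the substructure relation must be read in $L$ rather than $L\cup\{p\}$ so that $p$ may be reinterpreted on the embedded copy of $M$---is exactly the point the paper exploits implicitly when it reinterprets the constant naming $a$ as $b$ while keeping $\neg p$ satisfiable.
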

    \begin{proof}	
    	The ``only if" direction is easy to verify and we only prove the ``if" direction. 
    Let $\Phi$ be the set of all one-variable  $\Sigma_{n+1} $-formulas in the language $ L $,
    which by changing the bounded variables if  necessary, we assume is closed under $\wedge$ and $\vee$.
     Our objective is to show that  the  assumption of Lemma \ref{lem1 new} holds. Assume, for a contradiction, that there exists a model $(M,a,b)$ for the the theory 
    $$ T \cup``{\tp_{\Phi}(c) \subseteq \tp_{\Phi}(d)}" \cup \{p(c) \wedge \neg p(d)\} $$
    in the extended language that contains constant symbols $c,d$.
    \par 
We first  claim that in this case, there also exists a model for the following theory:
    $$T^*= T \cup \text{diag}_{\Pi_{n}} (M)\cup \neg p(a),$$
    where by 
    $\text{diag}_{\Pi_{n}} (M)$ we mean the set of $\Pi_{n}$ sentences in the language $L(M)$ satisfied by $M$.
    To prove the claim, we need to demonstrate that each finite subset of $T^*$ is satisfiable.
    Therefore, let
    $$T' = T \cup \{\forall \bar{x}\  \exists \bar{y} \cdots\ \psi(\bar{x},\bar{y},a,a_1,\cdots,a_n)\} \cup\{\neg p(a)\}$$
    be a portion of the theory $T^*$ where the formula
    $ \forall \bar{x} \ \exists \bar{y}\cdots \ \psi(x,y,a,a_1,\cdots,a_n)$ is a (conjunction of) formula(s) in $\text{diag}_{\Pi_{n}}(M)$.  
 It is then clear that:
     $$(M,a) \models   \exists t_1,\cdots,t_n \ \forall  \bar{x} \ \exists \bar{y}\cdots \ \psi(\bar{x},\bar{y},a, t_1,\cdots,t_n). $$
  Taking into account the assumption $\tp_\Phi(a) \subseteq \tp_\Phi(b)$ it follows that:
     $$(M,b) \models   \exists t_1,\cdots,t_n \ \forall  \bar{x} \ \exists  \bar{y}\cdots  \ \psi(\bar{x},\bar{y},b, t_1,\cdots,t_n). $$
This, together with the fact that
$b\not\in p(M)$ implies that
 $(M,b) \models T^\prime$. 
 \par 
Now, compactness implies that there exists  a model $(M^*,b^*)$ for the theory $T^*$. 
As $M^*\models \diag_{\Pi_{n}}(M)$, 
it is easy to observe that $M$ is $\Sigma_n$-closed in $M^*$. By the assumption of the theorem, 
we have $p(M) \subseteq p(M^*)$. But this leads to a contradiction, because $a$ belongs to $p(M)$ but it is not in $p(M^*)$.

    \end{proof}

    \begin{cor}
    	\label{cor-pi-n+1-definition}
    	Let $T$ be a theory in $L\cup \{p\}$ where $p$ is a unary predicate.    	Then there is a 
    	$  \Pi_{n+1}$
    	formula that defines
    	$p$ 
    	in all models of $T$, if and only if:
    	\begin{quote}
    		For each pair of  models $(M,p(M))$ and $(M^*,p(M^*))$ of $T$ if $M$ is 	$\Sigma_n$-closed in $M^*$,  then
    		for all $a\in M$ if $M^*\models p(a)$ then $M\models p(a)$.
    	\end{quote}
    \end{cor}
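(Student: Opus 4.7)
The plan is to derive this corollary as the formal dual of Lemma \ref{general form of prestel theorem}, using the observation that $p$ admits a uniform $\Pi_{n+1}$-definition exactly when $\neg p$ admits a uniform $\Sigma_{n+1}$-definition, and that the notion of $\Sigma_n$-closedness concerns only the base language $L$ and is therefore insensitive to whether we track $p$ or its complement.

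Concretely, I would introduce a fresh unary predicate symbol $q$ and let $T'$ be the theory in $L\cup\{q\}$ obtained from $T$ by replacing each occurrence of $p(x)$ with $\neg q(x)$. The models of $T'$ are then precisely the structures $(M, M\setminus p(M))$ with $(M,p(M))\models T$, and a uniform $\Sigma_{n+1}$-definition of $q$ across models of $T'$ yields, via negation and standard prenex conversion, a uniform $\Pi_{n+1}$-definition of $p$ across models of $T$, and conversely.

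Applying Lemma \ref{general form of prestel theorem} to $T'$ with predicate $q$ then gives that the $\Sigma_{n+1}$-definability of $q$ is equivalent to the condition: for every pair $(M,q(M)),(M^*,q(M^*))$ of models of $T'$ with $M$ $\Sigma_n$-closed in $M^*$, and every $a\in M$, $M\models q(a)$ implies $M^*\models q(a)$. Substituting $q\equiv \neg p$ and contraposing the implication over $a\in M$ converts this into the statement that $M^*\models p(a)$ implies $M\models p(a)$, which is exactly the condition in the corollary.

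There is no genuine obstacle here, since all the model-theoretic content has already been carried out in Lemma \ref{general form of prestel theorem}; the only point requiring a routine check is the syntactic fact that the negation of a $\Sigma_{n+1}$-formula, after prenex conversion, lies in $\Pi_{n+1}$.
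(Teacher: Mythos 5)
Your proposal is correct and is essentially the paper's own proof: the paper likewise disposes of the ``only if'' direction as easy and obtains the ``if'' direction by applying Lemma \ref{general form of prestel theorem} to the complement of $p$. Your version merely spells out the routine bookkeeping (the auxiliary predicate $q$, the contraposition, and the fact that negating a $\Sigma_{n+1}$-formula gives a $\Pi_{n+1}$-formula), all of which is sound.
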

\begin{proof}
	Again the `only-if' direction is easy to prove. For the `if' direction, 
apply the previous lemma to $p^c$, the complement of the predicate $p$.
\end{proof}

\section{The Ax-Kochen Theorem (an $ \exists\forall $-form)}
As in the introduction, in this section and the rest, the value group and residue field of a valued field $(K,A)$ are  denoted respectively by
$\Gamma$ and $k$. 
\label{ax-kochen}
The well-known theorem of Ax and Kochen on Henselian valued fields, asserts the following:
 \begin{quote}
  Suppose that Henselian valued fields $(K_1,A_1)$ and $(K_2,A_2)$ are  of \mbox{equicharacteristic $0$}. Then 
$(K_1,A_1) \equiv (K_2,A_2)$ as valued fields,  if and only if $\Gamma_1 \equiv \Gamma_2$ as ordered abelian groups and $k_1 \equiv k_2$ as fields.
 \end{quote}
Kuhlmann and Prestel  (in an appendix to \cite{kuh}) prove a version of the theorem above as follows:
   	 \begin{proposition}[Existential Ax-Kochen theorem]\label{kuhl-ax}
	Let
	$(K_1, A_1)$
	be a Henselian valued field with   residue characteristic zero, and
	$(K_2, A_2)$ be
	a valued field extension of
	$(K_1, A_1)$.
	If
	$k_1\subseteq_{_{\exists}}k_2$  and
	$\Gamma_1 \subseteq_{_{\exists}}\Gamma_2$, then $(K_1, A_1)\subseteq_{_{\exists}} (K_2, A_2)$.
\end{proposition}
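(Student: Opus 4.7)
The plan is to prove Proposition~\ref{kuhl-ax} by constructing a valued-field embedding from $(K_2,A_2)$ into a sufficiently saturated elementary extension of $(K_1,A_1)$ that fixes $K_1$ pointwise, and then transferring satisfaction of existential formulas back to $(K_1,A_1)$. Concretely, I would fix a $|K_2|^+$-saturated elementary extension $(K_1^*,A_1^*)$ of $(K_1,A_1)$; it is again Henselian of residue characteristic zero, and its residue field $k_1^*$ and value group $\Gamma_1^*$ are $|K_2|^+$-saturated extensions of $k_1$ and $\Gamma_1$ respectively (both sorts being interpretable in the valued field).

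From the hypothesis $k_1\subseteq_{\exists} k_2$ and the saturation of $k_1^*$, the standard fact that existential types over $k_1$ realised in $k_2$ are finitely satisfiable in $k_1^*$ yields a field embedding $\bar\iota\colon k_2\hookrightarrow k_1^*$ fixing $k_1$; analogously, $\Gamma_1\subseteq_{\exists}\Gamma_2$ together with the saturation of $\Gamma_1^*$ produces an ordered abelian group embedding $\iota_\Gamma\colon\Gamma_2\hookrightarrow\Gamma_1^*$ fixing $\Gamma_1$.

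The decisive step is to lift $\bar\iota$ and $\iota_\Gamma$ to a valued-field embedding. To ensure the valued field to be lifted is Henselian, I would first pass to the Henselization $(K_2^h,A_2^h)$ of $(K_2,A_2)$ inside a fixed algebraic closure; since $(K_1,A_1)$ is already Henselian we still have $(K_1,A_1)\subseteq(K_2^h,A_2^h)$, and as the Henselization is immediate the residue field and value group of $(K_2^h,A_2^h)$ still equal $k_2$ and $\Gamma_2$. Now invoke the equicharacteristic-zero Ax--Kochen--Ershov embedding lemma: given Henselian valued fields $(K_1,A_1)\subseteq(K_2^h,A_2^h)$ of residue characteristic zero and a Henselian extension $(K_1^*,A_1^*)$ of $(K_1,A_1)$ that is $|K_2^h|^+$-saturated and into whose residue field and value group the data of $(K_2^h,A_2^h)$ already embed over $k_1$ and $\Gamma_1$ via $\bar\iota,\iota_\Gamma$, these embeddings lift to a valued-field embedding $\iota\colon(K_2^h,A_2^h)\hookrightarrow(K_1^*,A_1^*)$ extending the identity on $K_1$. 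Restricting to $K_2$ gives the required embedding $(K_2,A_2)\hookrightarrow(K_1^*,A_1^*)$.

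With this embedding in hand the proof concludes formally: any existential $L_{\text{ring}}\cup\{A\}$-formula $\phi(\bar x)$ with parameters $\bar a\in K_1$ satisfied in $(K_2,A_2)$ is preserved upward by $\iota$, so $(K_1^*,A_1^*)\models\phi(\bar a)$, whence by elementarity $(K_1,A_1)\models\phi(\bar a)$. The main obstacle in this plan is the Ax--Kochen--Ershov embedding lemma itself, whose proof proceeds by a transfinite recursion treating residue-field-transcendental, value-group-transcendental, and immediate algebraic extensions in turn; Henselianity and residue characteristic zero are precisely what eliminate defect and make algebraic elements lift uniquely. Everything else---saturation, the extraction of the residue-field and value-group embeddings, and the final transfer of existentials---is standard model-theoretic bookkeeping.
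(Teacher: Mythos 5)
Your proposal is correct in outline, but note that the paper does not prove Proposition~\ref{kuhl-ax} at all: it is imported verbatim from the appendix of Kuhlmann--Prestel \cite{kuh}, so there is no in-paper argument to compare against. Your strategy --- pass to a $|K_2|^+$-saturated elementary extension $(K_1^*,A_1^*)$, use $\exists$-closedness plus saturation to embed $k_2$ and $\Gamma_2$ over $k_1$ and $\Gamma_1$, lift to a valued-field embedding via the equicharacteristic-zero embedding lemma, and pull existential formulas back by elementarity --- is exactly the standard proof, and it is also the skeleton the paper itself uses one level up in Theorem~\ref{ax- asli} (where it appeals to ``steps 1, 2 and 3 of the proof on page 192 of \cite{kuh}'' for the lifting step). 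Two remarks. First, your decision to Henselize $(K_2,A_2)$ before lifting is a genuinely necessary step that is easy to overlook, since the proposition does not assume $K_2$ Henselian and the embedding lemma does require it; the Henselization being immediate and algebraic keeps $k_2$, $\Gamma_2$ and the saturation bound unchanged, so this is handled correctly. Second, the entire mathematical weight of your argument rests on the AKE embedding lemma, which you state but do not prove; since that lemma is essentially the content of the Kuhlmann--Prestel result being cited, your proof is best read as a correct reduction of the proposition to the standard embedding lemma rather than a self-contained argument --- which is an acceptable level of detail given that the paper itself treats the proposition as a black box.
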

 In this section we aim to rely on Kulhmann and Prestel's result to further demonstrate that under the same conditions:
 \begin{quote}
 $(K_1,A_1)$ is $\exists\forall$-closed in 
 $(K_2,A_2)$ as valued fields, if and only if 
 $\Gamma_1$ is $\exists\forall$-closed in $\Gamma_2$ as ordered abelian groups and 
 $k_1$ is $\exists\forall$-closed in $k_2$ as fields.
 \end{quote}
 Again the proof will be provided after recalling some facts from elementary  model theory. 
   	 \subsection{Elementary model theory facts}
   	 We start by fixing some notation:
   	 \begin{notation}
   	 	\label{notation-fMNdelta}
   	 Let $\Delta$ be a set of formulas in a language $L$,  $M$ and $N$ be two structures, and
   	 $f:M\to N$ be a map.
   	 \begin{itemize}
   	 	\item 
   	 	We use the notation
   	 	$f : M\stackrel{\Delta}{\longrightarrow } N$ whenever for all formulas
   	 	$\varphi\in \Delta$ and all $\bar{a}\in M$
   	 	if $M\models \varphi(\bar{a})$ then $N\models \varphi(f(\bar{a}))$.   	 
   	 	\item 
   	 	We use the notation
   	 $  {M}\overset{\Delta}{\Longrightarrow}   {N}$
   	 whenever for all \textit{sentences} $\varphi\in \Delta$, if ${M}\models \varphi$  then $N\models \varphi$.	
   	 \end{itemize}
   	
   	 \end{notation}
   	The two notions above are linked to one another via the following theorem.  
   	 \begin{thm}\cite[Chapter 3]{tent}
   	 	\label{tent-ziegler}
   	 	Assume that $\Delta$ is a set of formulas closed under $\wedge,\exists$. Then the following are equivalent:
   	 	\begin{enumerate}
   	 		\item 
   	 		$ {M}\overset{\Delta}{\Longrightarrow} {N}$.
   	 		\item 
   	 		There exist a structure ${N}'\equiv {N}$ and a mapping $f:M\overset{\Delta}{\longrightarrow} N'$.
   	 	\end{enumerate}
   	 \end{thm}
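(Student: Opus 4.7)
The easy direction $(2)\Rightarrow(1)$ I would dispose of quickly: sentences are a special case of formulas, so if $f:M\overset{\Delta}{\longrightarrow} N'$ and $\varphi\in\Delta$ is a sentence with $M\models\varphi$, then $N'\models\varphi$, and because $N'\equiv N$ we conclude $N\models\varphi$. No closure hypothesis on $\Delta$ is used here.

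For the hard direction $(1)\Rightarrow(2)$, the plan is a standard compactness argument building the required $N'$ as a model of a suitable expanded theory. Introduce a fresh constant $c_a$ for every $a\in M$, and consider the theory
\[
T^* = \operatorname{Th}(N)\cup\bigl\{\varphi(c_{a_1},\ldots,c_{a_k}) : \varphi(\bar x)\in\Delta,\ \bar a\in M,\ M\models\varphi(\bar a)\bigr\}
\]
in the language $L\cup\{c_a : a\in M\}$. Any model $N'$ of $T^*$ is elementarily equivalent to $N$, and the assignment $f(a):=c_a^{N'}$ automatically satisfies $f:M\overset{\Delta}{\longrightarrow} N'$. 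So it suffices to check that $T^*$ is finitely satisfiable.

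Here is where the closure hypotheses do the work, and this is the only delicate point. Suppose for contradiction that some finite subset is inconsistent. Then there are finitely many formulas $\varphi_1(\bar x_1),\ldots,\varphi_m(\bar x_m)\in\Delta$ and tuples $\bar a_1,\ldots,\bar a_m$ from $M$ with $M\models\varphi_i(\bar a_i)$ for all $i$, such that $\operatorname{Th}(N)$ implies $\neg\bigwedge_i\varphi_i(\bar c_{\bar a_i})$. After collecting the $\bar a_i$ into a single tuple $\bar a$ and the $\varphi_i$ into $\psi(\bar x):=\bigwedge_i\varphi_i$, which lies in $\Delta$ by closure under $\wedge$, and quantifying out the fresh constants, we obtain $N\models\forall\bar x\,\neg\psi(\bar x)$. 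On the other hand $M\models\psi(\bar a)$, so $M\models\exists\bar x\,\psi(\bar x)$, and closure of $\Delta$ under $\exists$ makes $\exists\bar x\,\psi(\bar x)$ a $\Delta$-sentence true in $M$. Hypothesis (1) then forces $N\models\exists\bar x\,\psi(\bar x)$, contradicting the previous line.

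The main (and essentially only) obstacle is making sure, in this compactness computation, that every formula one needs to push from $M$ to $N$ as a sentence stays inside $\Delta$; this is precisely guaranteed by closure under $\wedge$ (to bundle the finitely many constraints into one) and under $\exists$ (to replace the fresh constants by existential quantifiers). Once $T^*$ is consistent, any model $N'$ together with $f(a)=c_a^{N'}$ witnesses (2).
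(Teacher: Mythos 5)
Your proof is correct. The paper does not prove this statement itself (it is cited from Tent--Ziegler), and your compactness argument via the $\Delta$-diagram of $M$ adjoined to $\operatorname{Th}(N)$ is precisely the standard proof of this result, with closure under $\wedge$ and $\exists$ invoked exactly where they are needed in the finite-satisfiability step.
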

   	 We can apply  Notation \ref{notation-fMNdelta} in a slightly different way:
   	 Let $\Delta$ be the set of formulas of certain \textit{complexity}, for example 
   	 the set of existential or $\exists\forall$-formulas.
   	 When we say that $f:M\overset{\Delta}{\longrightarrow} N$, or
   	 $M\overset{\Delta}{\Longrightarrow} N$,
   	 \textit{in the language
   	 $L$},  we mean that the condition in Notation \ref{notation-fMNdelta}
   	 holds for formulas of the form $\Delta$ in the language $L$.
   	Now according to Definition \ref{defn-sigma-n-closed},
   	 it is easy to observe that
   	 	${M}{\subseteq}_{_{\Delta}} {N}$ in the language $L$ if and only if $ {N}\overset{\Delta}{\Longrightarrow} {M}$ in the language $L(M)$.
   	  Combining  this observation with Theorem \ref{tent-ziegler} we have the following:
   	 \begin{cor}
   	 	\label{coro2}
   	 	Let $M$ and $N$ be two structures. Then
   	 	${M}{\subseteq_{_{\Delta}}} {N}$
   	 	 if and only if there exist a structure ${M}\prec {M}'$ and a mapping
   	 	 $f:N\overset{\Delta}{\longrightarrow} M'$. Using compactness, this is equivalent to the fact that there exists
   	 	  $f:N\overset{\Delta}{\longrightarrow} M'$ 
 for any $|N|$-saturated elementary extension $M'$ of $M$.
   	 \end{cor}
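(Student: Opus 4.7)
The plan is to derive both equivalences directly from the observation preceding the corollary, namely that $M \subseteq_{\Delta} N$ in $L$ is equivalent to $N \overset{\Delta}{\Longrightarrow} M$ in the expanded language $L(M)$, combined with Theorem \ref{tent-ziegler} applied in $L(M)$.

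For the first equivalence, assume $M \subseteq_{\Delta} N$; by the observation, $N \overset{\Delta}{\Longrightarrow} M$ in $L(M)$, and Theorem \ref{tent-ziegler} applied in $L(M)$ then yields an $L(M)$-structure $M_0 \equiv M$ together with an $L(M)$-map $f : N \overset{\Delta}{\longrightarrow} M_0$. The key translation is that an $L(M)$-structure elementarily equivalent to $M$ is, via the interpretation of the new constants, precisely an $L$-elementary extension of $M$, and that an $L(M)$-$\Delta$-map is exactly an $L$-$\Delta$-map fixing $M$ pointwise (preserving the atomic formulas $c_a = x$ forces $f(a) = a$). Writing $M' = M_0$, one obtains the desired $M \prec M'$ and $f : N \overset{\Delta}{\longrightarrow} M'$. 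Conversely, given such an $M'$ and $f$ (with $f$ fixing $M$), for any $\bar a \in M$ and $\varphi \in \Delta$ with $N \models \varphi(\bar a)$ one has $M' \models \varphi(f(\bar a)) = \varphi(\bar a)$, whence $M \models \varphi(\bar a)$ by $M \prec M'$.

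For the saturation version, given $M \subseteq_{\Delta} N$, the previous paragraph yields some $M \prec M_0$ with $f_0 : N \overset{\Delta}{\longrightarrow} M_0$ fixing $M$, and L\"owenheim--Skolem allows us to assume $|M_0|$ is bounded by $|N| + |L|$. For any sufficiently saturated elementary extension $M'$ of $M$, one embeds $M_0$ into $M'$ over $M$ by saturation and composes with $f_0$ to produce $f$. Equivalently and more directly, the type consisting of $\varphi(\bar v_{\bar b})$ for every $\Delta$-formula $\varphi$ and every tuple $\bar b$ in $N$ with $N \models \varphi(\bar b)$, together with $v_a = a$ for $a \in M$, is finitely satisfiable in $M'$ (by the first equivalence applied to finite fragments, combined with $M \prec M'$), and its realization using the saturation of $M'$ produces the required $f$. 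The only delicate point throughout is the bookkeeping between $L$ and $L(M)$; once that is clear, everything reduces to a direct application of Theorem \ref{tent-ziegler} plus a standard compactness argument.
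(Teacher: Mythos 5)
Your proposal is correct and follows exactly the route the paper intends: the paper gives no written proof, simply asserting that the corollary follows by combining the observation that ${M}\subseteq_{\Delta}{N}$ in $L$ is the same as ${N}\overset{\Delta}{\Longrightarrow}{M}$ in $L(M)$ with Theorem \ref{tent-ziegler}, plus a standard saturation/compactness argument for the second clause. You have merely spelled out that derivation, and your explicit remark that $f$ must fix $M$ pointwise (so that the converse direction goes through) is a welcome clarification of a point the paper leaves implicit.
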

The corollary above combined with the following easy observation
   	 provide us with useful consequences. 
   	 \begin{obs}
   	 	\label{obs-existsforall=exists}
   	 	For any embedding $f:N\longrightarrow M'$ we have 	$f:N\overset{\exists}{\longrightarrow} M'$.  Also we have 
   	 	$f:N\overset{\forall }{\longrightarrow} M'$  if and only if $f:N\overset{\exists\forall}{\longrightarrow} M'$.
   	 \end{obs}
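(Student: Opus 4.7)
The plan is to handle the two assertions in turn, both by a routine witness-chasing argument using only the definitions of embedding and of preservation of formulas.

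For the first assertion, I would recall that every existential formula is logically equivalent to one of the form $\exists\bar{y}\,\psi(\bar{x},\bar{y})$ with $\psi$ quantifier-free. Since embeddings preserve (and reflect) atomic and negated-atomic formulas, they preserve all quantifier-free formulas. Thus if $N\models \exists\bar{y}\,\psi(\bar{a},\bar{y})$ with a witness $\bar{b}\in N$, then $N\models \psi(\bar{a},\bar{b})$ gives $M'\models \psi(f(\bar{a}),f(\bar{b}))$, so $f(\bar{b})$ is a witness in $M'$ for the formula evaluated at $f(\bar{a})$. This is exactly the statement $f:N\overset{\exists}{\longrightarrow} M'$.

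For the second assertion, the direction ``$\exists\forall\Rightarrow\forall$'' is immediate, since a $\forall$-formula $\forall\bar{z}\,\psi$ may be viewed as an $\exists\forall$-formula with an empty outer existential block, so any $f$ preserving $\exists\forall$-formulas automatically preserves $\forall$-formulas. For the converse, suppose $f$ preserves universal formulas. Let $\varphi(\bar{x})=\exists\bar{y}\,\forall\bar{z}\,\psi(\bar{x},\bar{y},\bar{z})$ be an $\exists\forall$-formula and assume $N\models\varphi(\bar{a})$. Pick a witness $\bar{b}\in N$ with $N\models \forall\bar{z}\,\psi(\bar{a},\bar{b},\bar{z})$; regarding $\bar{a},\bar{b}$ as parameters, this is a universal formula true in $N$, and the preservation hypothesis yields $M'\models \forall\bar{z}\,\psi(f(\bar{a}),f(\bar{b}),\bar{z})$. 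Thus $f(\bar{b})$ witnesses $\varphi$ at $f(\bar{a})$ in $M'$, completing the argument.

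I do not foresee any obstacle here: both claims are classical preservation-theorem-style observations. The only conceptual point worth flagging is the implicit move, in the converse of the second part, from an $\exists\forall$-formula with free variables to a universal formula whose parameters include the $\exists$-witness chosen inside $N$; everything else is a direct unravelling of Notation \ref{notation-fMNdelta}.
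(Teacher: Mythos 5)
Your argument is correct and is exactly the standard witness-chasing reasoning the paper has in mind; the authors state this as an ``easy observation'' and omit the proof entirely. Both steps --- preservation of quantifier-free formulas under embeddings for the first claim, and pushing the existential witness from $N$ into $M'$ for the converse of the second --- are the intended justification, so there is nothing to add.
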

   	 We will need the second item in the corollary below, in the sections to come. 
   	 \begin{cor}\label{ex-closed}\hfill
   	 	\begin{enumerate}
   	 		\item 
   	 		$ M{\subseteq_{_{\exists}}} {N} $ if and only if there exists
   	 		an embedding
   	 		$f:N\longrightarrow M'$ for 
   	 		 a structure $M'$ with ${M}\prec {M}'$.
   	 		 In other words, 
   	 		 ${M}{\subseteq_{_{\exists}}} {N}$ if and only if for all $|N|$-saturated elementary extensions ${M}'$  of ${M}$ there is an embedding 
   	 		 $f:{N} \longrightarrow {M}'$, that is such that
   	 		 $f(N)\subseteq {M}'$.
   	 		\item 
   	 		$ {M}{\subseteq_{_{\exists\forall}}} {N}  $ if and only if there  exists 
   	 		a mapping
   	 		$f:N\overset{\forall}{\longrightarrow} M'$ for a
   	 		structure 
   	 		$M'$ with
   	 		${M}\prec {M}'$.
   	 		In other words, 
   	 		${M}{\subseteq_{_{\exists\forall}}} {N}$ if and only if for all $|N|$-saturated elementary extensions ${M}'$  of ${M}$ there is 
   	 		a mapping $f$ such that 
   	 		$f:{N} \overset{\forall}{\longrightarrow}{M}'$, that is such that
   	 		$f(N){\subseteq_{_{\exists}}} {M}'$.
   	 	\end{enumerate}
   	 \end{cor}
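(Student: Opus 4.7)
The plan is to deduce both parts directly from Corollary \ref{coro2} by specializing the set $\Delta$ and then reinterpreting the resulting maps via Observation \ref{obs-existsforall=exists}. In both cases, the ``for all $|N|$-saturated elementary extensions'' reformulation is automatic once the ``there exists'' form is established, since it is already part of Corollary \ref{coro2}.

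For part (1), I would take $\Delta$ to be the set of existential formulas; it is closed under $\wedge$ and $\exists$, so Corollary \ref{coro2} applies and yields that $M \subseteq_{\exists} N$ is equivalent to the existence of an elementary extension $M \prec M'$ together with a map $f: N \overset{\exists}{\longrightarrow} M'$. Observation \ref{obs-existsforall=exists} tells us that every embedding preserves existential formulas; conversely, any map that preserves existential formulas automatically preserves atomic formulas and their negations (the latter being existential after trivial manipulation), and is therefore an embedding. Hence the map furnished by Corollary \ref{coro2} is exactly an embedding, as required.

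For part (2), I would instead take $\Delta$ to be the set of $\exists\forall$-formulas, again closed under $\wedge$ and $\exists$. Corollary \ref{coro2} then gives $M \subseteq_{\exists\forall} N$ iff there exist $M \prec M'$ and a map $f: N \overset{\exists\forall}{\longrightarrow} M'$. By the second clause of Observation \ref{obs-existsforall=exists}, $f: N \overset{\exists\forall}{\longrightarrow} M'$ is the same condition as $f: N \overset{\forall}{\longrightarrow} M'$, which is the desired characterization. For the parenthetical reformulation $f(N) \subseteq_{\exists} M'$, note that a map preserving universal formulas is in particular an embedding (since negated atomic formulas are universal), so $f$ identifies $N$ with $f(N)$, and preservation of universal formulas from $N$ by $f$ then says exactly that every universal formula with parameters from $f(N)$ that holds in $f(N)$ also holds in $M'$; taking negations, this is precisely the statement $f(N) \subseteq_{\exists} M'$.

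There is no serious obstacle: the result is a packaging of Corollary \ref{coro2} at the two complexity levels relevant for the $\exists\forall$-Ax--Kochen theorem to be proved in the next subsection. The only small verification is the equivalence between ``preserving universal formulas from $N$'' and ``$f(N)$ is existentially closed in $M'$'', which rests only on the automatic embedding property mentioned above.
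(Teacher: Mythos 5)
Your proposal is correct and follows exactly the route the paper intends: the paper states this corollary without a written proof, presenting it as the combination of Corollary \ref{coro2} (specialized to $\Delta = \exists$ and $\Delta = \exists\forall$) with Observation \ref{obs-existsforall=exists}, which is precisely your argument. Your added verifications (that an $\exists$-preserving map is an embedding, and that $\forall$-preservation from $N$ translates to $f(N)\subseteq_{\exists}M'$) are the correct and only details needing to be checked.
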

   	  \subsection{Relation to the Ax-Kochen Theorem}
Recalling Proposition \ref{kuhl-ax}, 
   	   	 by the machinery developed in  previous sections, we have an easy generalization as below.
   	   	  Note that
   	   	  in the following proof, we use both the proof and the statement of theorem Ax-Kochen-Ershov in \cite[page 183]{kuh}. 
   	 \begin{thm} 
   	 	\label{ax- asli}
   		Let
   		$(K_1, A_1)$
   		be a Henselian valued field  with residue characteristic zero, and
   		$(K_2, A_2)$ be
   		a valued field extension of
   		$(K_1, A_1)$. If 
   		$k_1\subseteq_{_{\exists\forall }} k_2$  and
   		$\Gamma_1 \subseteq_{_{\exists\forall }}\Gamma_2$, then $(K_1, A_1)\subseteq_{_{\exists\forall }} (K_2, A_2)$.
   	\end{thm}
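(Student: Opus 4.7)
Proof plan. My strategy mirrors the proof of Proposition~\ref{kuhl-ax} at the $\exists\forall$ level, using Corollary~\ref{ex-closed}(2) in place of Corollary~\ref{ex-closed}(1). By Corollary~\ref{ex-closed}(2), to establish $(K_1, A_1)\subseteq_{_{\exists\forall}}(K_2, A_2)$ it suffices to construct, for every $|K_2|$-saturated elementary extension $(K_1^*, A_1^*)$ of $(K_1, A_1)$, a valued field embedding $f:(K_2, A_2)\hookrightarrow (K_1^*, A_1^*)$ extending $K_1\hookrightarrow K_1^*$ and satisfying $f(K_2)\subseteq_{_{\exists}}(K_1^*, A_1^*)$.

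To build $f$, first note that $(K_1^*, A_1^*)$ is itself Henselian of residue characteristic zero, so by the Ax--Kochen--Ershov theorem $k_1\preceq k_1^*$ and $\Gamma_1\preceq \Gamma_1^*$; moreover, since the residue field and value group are interpretable in the valued field, they inherit enough saturation to enable Corollary~\ref{ex-closed}(2) in their sorts. Applying that corollary to the hypotheses $k_1\subseteq_{_{\exists\forall}}k_2$ and $\Gamma_1\subseteq_{_{\exists\forall}}\Gamma_2$, I obtain embeddings $f_k:k_2\hookrightarrow k_1^*$ and $f_\Gamma:\Gamma_2\hookrightarrow \Gamma_1^*$ extending the given inclusions and satisfying $f_k(k_2)\subseteq_{_{\exists}} k_1^*$ and $f_\Gamma(\Gamma_2)\subseteq_{_{\exists}}\Gamma_1^*$. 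I then lift the pair $(f_k, f_\Gamma)$ to a valued field embedding $f:(K_2, A_2)\hookrightarrow (K_1^*, A_1^*)$ following the Ax--Kochen--Ershov embedding construction of \cite[p.~183]{kuh}, an inductive procedure in which at each step one adjoins a lift of a prescribed residue, an element of prescribed value, or a root of a suitable polynomial, each realisable in $(K_1^*, A_1^*)$ by its Henselianness and saturation.

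Finally, to verify $f(K_2)\subseteq_{_{\exists}}(K_1^*, A_1^*)$, I apply Proposition~\ref{kuhl-ax} to the Henselization $L$ of $f(K_2)$ inside $(K_1^*, A_1^*)$: since $L$ is Henselian of residue characteristic zero with residue field $f_k(k_2)\subseteq_{_{\exists}} k_1^*$ and value group $f_\Gamma(\Gamma_2)\subseteq_{_{\exists}}\Gamma_1^*$, Proposition~\ref{kuhl-ax} yields $L\subseteq_{_{\exists}}(K_1^*, A_1^*)$. The main obstacle is to descend this $\exists$-closedness from $L$ to $f(K_2)$ itself, since $f(K_2)$ need not be Henselian and Proposition~\ref{kuhl-ax} does not apply to it directly; the intended resolution is to refine the construction of $f$ in the preceding step so that any existential formula over $f(K_2)$ satisfied in $L$ is already satisfied in $f(K_2)$, using the saturation of $(K_1^*, A_1^*)$ together with the $\exists$-closedness of $f_k(k_2)$ and $f_\Gamma(\Gamma_2)$ established above.
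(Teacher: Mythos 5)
Your proposal follows the paper's own argument almost step for step: pass to a $|K_2|^+$-saturated elementary extension $(K^*,A^*)$ of $(K_1,A_1)$, use Corollary~\ref{ex-closed}(2) together with the saturation of the interpretable residue field and value group to embed $k_2$ and $\Gamma_2$ with existentially closed images, lift this pair to a valued field embedding of $(K_2,A_2)$ via the Kuhlmann--Prestel embedding steps, invoke Proposition~\ref{kuhl-ax} to get existential closedness of the image, and read off the conclusion from Corollary~\ref{ex-closed}(2). The only divergence is the ``obstacle'' you flag at the end, and you are right to flag it: Proposition~\ref{kuhl-ax} requires the lower field to be Henselian, and descending $\exists$-closedness from the Henselization $L$ back to $f(K_2)$ is not something you can arrange by ``refining the construction of $f$''. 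If $f(K_2)$ is genuinely non-Henselian, some existential formula over $f(K_2)$ (asserting a Hensel root) holds in $K^*$ but fails in $f(K_2)$; worse, the failure of Hensel's lemma in a fixed degree is itself expressible by an $\exists\forall$-sentence, so the statement as written is actually false for non-Henselian $(K_2,A_2)$. The paper's proof has exactly the same issue, silently: it applies Proposition~\ref{kuhl-ax} with $(K_2',A_2')\cong(K_2,A_2)$ in the role of the Henselian lower field without checking Henselianity. The intended reading is that $(K_2,A_2)$ is Henselian --- as it is in the paper's only application, where $K_2$ is a Hahn series field --- in which case $L=f(K_2)$ and your final step closes with no further work. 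So your proof is essentially the paper's, and the one gap you could not close is a gap in the hypotheses rather than in your argument.
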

   	 	\begin{proof}
   	 		Let $(K^*, A^*)$ be a $|K_2|^+$-saturated 
   	 		elementary extension of $(K_1, A_1)$.
   	 		 Since the extension is
   	 		elementary, $(K^*, A^*)$ is a Henselian valued field. Moreover, the value group $\Gamma^*$ is an
   	 		elementary $|K_2|^+$-saturated 
   	 		extension of $\Gamma_1$.
   	 		As
   	 		$\Gamma_1 \subseteq_{_{\exists\forall }} \Gamma_2$, by Corollary \ref{ex-closed}
   	 		there exists a mapping $f$ such that
   	 		$f:\Gamma_2 \overset{\forall}{\longrightarrow} \Gamma^*$, therefore 
   	 		$ f(\Gamma_2) \subseteq_{_{\exists}} \Gamma^* $. 
   	 		Similarly, there exists a mapping $g$ such that
   	 		$g (k_2)  \subseteq_{_{\exists}} k^* $. 
   	 		Following steps 1, 2 and 3 of the proof on page 192 of \cite{kuh}, it is possible to find
   	 		 a valued field $ (K_2',A'_2) $ isomorphic to $(K_2,A_2)$ whose residue field and value group
   	 		 are respectively $g(k_2)$ and $f(\Gamma_2)$. So $(K'_2,A'_2)$ is an extension of $(K_1,A_1)$
   	 		 and its residue field and value group are respectively $\exists$-closed in the residue field and value group
   	 		 of $(K^*,A^*)$.
   	 		 By  Proposition  \ref{kuhl-ax} we have $(K'_2,A'_2)$ is $\exists$-closed in $(K^*,A^*)$, thus we have a mapping $h$ such that
   	 		$ h((	K_2,A_2))  \subseteq_{_{\exists}} (K^*,A^*) $, equivalently
   	 		such that  $h:(K_2,A_2)\overset{\forall}{\longrightarrow} (K^*,A^*)$.
   	 		Now
by Corollary
   	 		\ref{ex-closed}, we conclude that
   	 		$ (	K_1,A_1) \subseteq_{_{\exists\forall}} (K_2,A_2)   $.
    	    \end{proof}

 	\section{Finding suitable groups}\label{group-construction}
 \subsection{Quantifier-Elimination for the theory of ordered abelian groups}
  Let $L_{\text{oag}}$ denote  the language of ordered abelian groups (containing the binary function symbol ``$-$'' as well).
  By a \textit{congruence equation modulo $n$}, we mean 
  a formula of the form\mbox{ $\exists t\ y-x=nt$}, which we denote  by  ``$x\overset{n}{\equiv} y$''. 
  Let $ \psi $ be a conjunction of congruence equations, and
  $\varphi(\bar{z}_1,\ldots,\bar{z}_n;\bar{y})$ be  a formula of the following form:
\begin{align}
\label{r-phi-exists-u}
  \exists u_1,\cdots,u_r\quad \psi(\bar{z}_1,\cdots,\bar{z}_n;\bar{y},u_1,\cdots,u_r).
\end{align}
    For $\varphi$ as above, let
  $R_{\varphi}(x_1,\ldots,x_n;\bar{y})$ be the following formula:
 \begin{align}
 \label{general-form-of-R-phi}
  \exists \bar{z}_1,\cdots,\bar{z}_n \quad \Big(\bigwedge_{i=1}^n( 0<\bar{z}_i<x_i)\wedge \varphi(\bar{z}_1,\cdots,\bar{z}_n;\bar{y})\Big),
 \end{align}
  where by $\bar{z}_i<x$ we mean $z_{i_j}<x$ for all $i_j$ in the representation $\bar{z}_i=(z_{i_1},\ldots,z_{i_k})$.
  Let $L''_{\text{oag}}$~\footnote{$L''$ is used to be compatible with the notation of \cite{group}.}
   be the language obtained by adding to $L_{\text{oag}}$,  predicates  
   $\overset{n}{\equiv}$
    for congruence equations for all
  $n \in \mathbb{N}$,
   and $R_{\varphi}$ for all 
  $\varphi(\bar{z}_1,\ldots,\bar{z}_n;\bar{y})$ of the mentioned form. 
 In \cite{group} the following result is established for abelian groups in the mentioned language:
  \begin{fact}\cite[Theorem 1.1]{group}
  	\label{1va1 group}
  	Let $G,H$ and $N$ be ordered abelian groups such that $N$ is a subgroup of	$G$ and $H$. Then the following are equivalent:
  	\begin{enumerate}
  		\item  $G$ and $H$
  		satisfy the same existential sentences in $L_{\text{oag}}(N)$,
  		\item $G,H$ satisfy the same atomic sentences in  the language $L''_{\text{oag}}(N)$. 
  	\end{enumerate}
  \end{fact}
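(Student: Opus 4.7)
The proof naturally splits into two directions. For (1) $\Rightarrow$ (2), I would observe that every atomic $L''_{\text{oag}}(N)$-formula is, by construction, equivalent to an existential $L_{\text{oag}}(N)$-formula: the predicate $x \overset{n}{\equiv} y$ unfolds to $\exists t\,(y - x = nt)$, and each $R_\varphi$ is, by its definition in \eqref{general-form-of-R-phi}, a prenex existential $L_{\text{oag}}$-formula. Hence if $G$ and $H$ agree on all existential $L_{\text{oag}}(N)$-sentences, they automatically agree on all atomic $L''_{\text{oag}}(N)$-sentences, and this direction requires no structural input beyond unfolding the definitions.

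For the harder direction (2) $\Rightarrow$ (1), the plan is a saturation/back-and-forth argument. Let $H^*$ be a $|G|^+$-saturated $L_{\text{oag}}$-elementary extension of $H$. I aim to construct an $L_{\text{oag}}(N)$-embedding $j : G \hookrightarrow H^*$; by Corollary \ref{ex-closed}, the existence of such a $j$ yields $H \subseteq_{\exists} G$ in $L_{\text{oag}}(N)$, and the symmetric argument, using a $|H|^+$-saturated elementary extension $G^*$ of $G$, then gives the reverse inclusion and hence equality of existential $L_{\text{oag}}(N)$-theories. The embedding $j$ is built inductively: at each stage I have a finitely generated intermediate subgroup $N \subseteq A \subseteq G$ and a partial $L_{\text{oag}}$-embedding $j_0 : A \to H^*$; given a new element $a \in G$, the task is to realize the quantifier-free $L_{\text{oag}}$-type of $a$ over $A$ at some element of $H^*$ extending $j_0$.

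The main obstacle, and the real content of the theorem, is to show that this quantifier-free $L_{\text{oag}}(A)$-type is determined entirely by the \emph{atomic} $L''_{\text{oag}}(A)$-type of $a$, i.e.\ by (i) the ordering relations among $\mathbb{Z}$-linear combinations of $a$ with elements of $A$, (ii) the congruence class of such combinations modulo each $n$, and (iii) the truth values of $R_\varphi$-predicates locating $a$ within bounded intervals of $A$. The delicate point is that in an ordered abelian group, realizability of a quantifier-free configuration depends on more than purely local archimedean information: it also requires the existence of auxiliary witnesses living in bounded regions, which is precisely what the $R_\varphi$-predicates are designed to encode. Establishing that this bounded-witness data suffices reduces to a Presburger-style analysis carried out relative to the convex subgroups generated by $A$; this analysis rewrites every existential $L_{\text{oag}}(A)$-formula as a positive Boolean combination of atomic $L''_{\text{oag}}(A)$-formulas. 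Once this reduction is available, assumption (2) combined with the $|G|^+$-saturation of $H^*$ lets the inductive step go through, producing the required embedding $j$.
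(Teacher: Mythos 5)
You should first note that the paper offers no proof of this statement at all: it is imported verbatim as a Fact from Weispfenning \cite{group}, so there is no in-paper argument to measure your proposal against. On its own terms, your direction (1) $\Rightarrow$ (2) is correct and complete: each $L''_{\text{oag}}(N)$-atomic sentence unfolds by definition into an existential $L_{\text{oag}}(N)$-sentence, so agreement on the latter gives agreement on the former.

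The gap is in (2) $\Rightarrow$ (1). The outer frame (embed $G$ over $N$ into a saturated elementary extension $H^*$ of $H$, then symmetrize) is the standard one, but the entire content of the theorem sits in the step you dispose of in one sentence: that the quantifier-free $L_{\text{oag}}(A)$-type of a new element is determined by its atomic $L''_{\text{oag}}(A)$-type, equivalently that every existential $L_{\text{oag}}$-formula reduces to a positive Boolean combination of $L''_{\text{oag}}$-atomic formulas. Calling this ``a Presburger-style analysis relative to convex subgroups'' does not prove it; the interaction of congruence conditions with the ordering across infinitely many convex subgroups is precisely what defeats a naive Presburger argument and is the reason the $R_\varphi$-predicates were introduced in the first place. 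Note also that your inductive step needs this reduction even to run: hypothesis (2) only gives agreement on atomic sentences with parameters from $N$, and finite satisfiability of the type of $a$ over a larger finitely generated $A$ in $H^*$ requires transferring \emph{existential} statements over $A$, which does not follow from preservation of atomic ones without the reduction. Finally, be aware that within this paper the reduction you are missing is exactly the unnamed Theorem stated immediately after Fact \ref{1va1 group}, which the authors \emph{derive from} the Fact via Lemma \ref{lem1 new}; so filling your gap by appeal to that Theorem would be circular. As written, your proposal is a correct skeleton with the essential lemma asserted rather than proved.
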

The fact above can be put as a  quantifier-elimination statement as below:
  \begin{thm}
Each existential $L_{\text{oag}}$-formula is equivalent to a conjunction of disjunctions of atomic formulas in $L''_{\text{oag}}$.
  \end{thm}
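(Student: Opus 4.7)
The plan is to deduce the theorem from Fact \ref{1va1 group} by a standard compactness argument for relative quantifier elimination. Given an existential $L_{\text{oag}}$-formula $\theta(\bar x)$, let $\Phi(\bar x)$ be the set of all finite disjunctions of atomic $L''_{\text{oag}}$-formulas that are consequences of $\theta$ modulo the theory $T_{\mathrm{OAG}}$ of ordered abelian groups. It suffices to prove $T_{\mathrm{OAG}}\cup\Phi(\bar x)\models\theta(\bar x)$, since compactness would then extract a finite $\Phi_0\subseteq\Phi$ with $T_{\mathrm{OAG}}\models\theta\leftrightarrow\bigwedge\Phi_0$, which is exactly a conjunction of disjunctions of atomic $L''_{\text{oag}}$-formulas as required.

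Arguing by contradiction, I would assume there is an OAG $G$ with $\bar a\in G$ satisfying $\Phi(\bar a)\wedge\neg\theta(\bar a)$ and build an OAG $H$ with $\bar b\in H$ such that $(H,\bar b)$ has the same atomic $L''_{\text{oag}}$-diagram as $(G,\bar a)$ over the common subgroup $N:=\langle\bar a\rangle\cong\langle\bar b\rangle$, but with $H\models\theta(\bar b)$. Fact \ref{1va1 group}, applied to $G$ and $H$ over $N$, would then force $G$ and $H$ to agree on existential $L_{\text{oag}}(N)$-sentences; in particular $G\models\theta(\bar a)$, a contradiction. Constructing $(H,\bar b)$ reduces to verifying the consistency of $T_{\mathrm{OAG}}\cup\{\theta(\bar x)\}$ together with the positive and negative atomic $L''_{\text{oag}}$-diagram of $(G,\bar a)$. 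If a finite subset were inconsistent, compactness would yield an implication $T_{\mathrm{OAG}}\models\theta(\bar x)\to\bigvee_i\neg\alpha_i(\bar x)\vee\bigvee_j\beta_j(\bar x)$, with $G\models\alpha_i(\bar a)$ and $G\not\models\beta_j(\bar a)$; once each $\neg\alpha_i$ is rewritten as a disjunction of atomic $L''_{\text{oag}}$-formulas, the right-hand side becomes a member of $\Phi$ which $G$ is assumed to satisfy at $\bar a$, contradicting the choice of the $\alpha_i$ and $\beta_j$.

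The key technical step, and the main obstacle I anticipate, is the positive rewriting of negated atomic $L''_{\text{oag}}$-formulas. For the primitive symbols the rewriting is routine: trichotomy handles $\neg(t_1=t_2)$ and $\neg(t_1<t_2)$, and a disjunction indexed by the nonzero residues modulo $n$ handles $\neg(t_1\overset{n}{\equiv}t_2)$. The delicate case is $\alpha=R_\varphi(\bar t)$, whose negation is a priori universal rather than existential. I would address this by invoking the precise construction of the family $\{R_\varphi\}_\varphi$ from \cite{group}, which is designed to be closed under the operations needed to express the complement via another predicate $R_{\varphi'}$; alternatively, the consistency argument can be localized so that only positive $R_\varphi$-information enters the diagram transferred from $(G,\bar a)$ to $(H,\bar b)$, which is already sufficient to invoke Fact \ref{1va1 group} in the shape actually needed.
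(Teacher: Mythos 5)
Your overall route---reduce to Weispfenning's Fact~\ref{1va1 group} by a compactness/separation argument---is the same as the paper's, which packages the compactness step as Lemma~\ref{lem1 new}. But there is a genuine gap at exactly the step you describe: negated atomic $L''_{\text{oag}}$-formulas cannot in general be rewritten as positive disjunctions of atomic $L''_{\text{oag}}$-formulas, and your consistency argument needs this. For congruences the claim is simply false: there are no constant symbols naming residues, $G/nG$ need not be cyclic of order $n$, and concretely, in $G=\mathbb{Z}$ every atomic $L''_{\text{oag}}$-formula satisfied by $1$ is also satisfied by $2$ (doubling any witness tuple for an $R_\varphi$-instance at $1$ gives a witness at $2$, since multiplication by $2$ preserves order, equality and every relation $\stackrel{n}{\equiv}$), yet $\neg(x\stackrel{2}{\equiv}0)$ holds at $1$ and fails at $2$; hence no positive disjunction of atomics is equivalent to $\neg(x\stackrel{2}{\equiv}0)$, even over $\mathbb{Z}$. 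Likewise the family $\{R_\varphi\}$ is not closed under complementation---the paper treats $\neg R_\varphi$ as an irreducible building block throughout (Corollary~\ref{exist-forall}, Theorem~\ref{exists-forall-closed-f1}) precisely because it cannot be traded for a positive $R_{\varphi'}$.

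The gap is avoidable, and your own second alternative points the way, but it changes which half of the diagram you may transfer. For the final appeal to Fact~\ref{1va1 group} you need the atomic $L''_{\text{oag}}$-type of $\bar b$ in $H$ to be \emph{contained in} that of $\bar a$ in $G$, so that $H\models\theta(\bar b)$ pushes down to $G\models\theta(\bar a)$; for that it suffices to transfer only the \emph{negative} atomic diagram $\{\neg\beta:\ G\not\models\beta(\bar a)\}$ and to drop the positive part. A finite inconsistency then yields $T_{\mathrm{OAG}}\models\theta\to\bigvee_j\beta_j$ with $G\not\models\beta_j(\bar a)$ for all $j$, i.e.\ a member of $\Phi$ that $\bar a$ violates---no rewriting of negations is ever needed. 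The price is that $(G,\bar a)$ and $(H,\bar b)$ then satisfy possibly different atomic $L''_{\text{oag}}(N)$-sentences (inclusion one way only), so you must invoke the one-directional form of Weispfenning's theorem (more atomic $L''_{\text{oag}}$-information implies more existential $L_{\text{oag}}$-information) rather than the biconditional quoted as Fact~\ref{1va1 group}. This one-sided form is exactly what the paper's own proof tacitly uses when it feeds the hypothesis $\tp_\Phi(\bar a)\subseteq\tp_\Phi(\bar b)$ of Lemma~\ref{lem1 new} into Fact~\ref{1va1 group}; as written, your version turns that mismatch from a cosmetic one into a fatal one, because the impossible positive rewriting is the only thing holding your finite-inconsistency step together.
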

\begin{proof}
	Let $\varphi(\bar{x})$ be an existential $L_{\text{oag}}$-formula. Treat $\varphi(\bar{x})$ as a predicate $p(\bar{x})$ in the language $L_{\text{oag}}\cup \{p\}$.
	Let $\Phi$ be the set of all (conjunction of disjunctions of) atomic formulas in $L''_{\text{oag}}$.  According to Lemma \ref{lem1 new}, to prove the theorem, it suffices to show that for all ordered abelian groups $M$ we have:
\begin{quote}
	For each $\bar{a},\bar{b}\in M$ if $\tp_{\Phi}(\bar{a})\subseteq \tp_{\Phi}(\bar{b})$ then if $\bar{a}\in \varphi(M^{|\bar{a}|})$ then $\bar{b}\in \varphi(M^{|\bar{a}|})$.
\end{quote}
Add a tuple of constants $\bar{c}$ to $L_{\text{oag}}$ and consider the $L_{\text{oag}}\cup \{\bar{c}\}$-structures $(M,\bar{a})$ and $(M,\bar{b})$.
Consider the group generated by $\bar{c}$ a common substructure of $(M,\bar{a})$ and $(M,\bar{b})$. 
By item 2 we have the condition  ``$\tp_{\Phi}(\bar{a})\subseteq \tp_{\Phi}(\bar{b})$'' in Fact \ref{1va1 group}. By item 1 in the mentioned fact,
 for $\bar{a}$ and $\bar{b}$  we have
 $M\models \varphi(\bar{a})$ if and only if $M\models \varphi(\bar{b})$.
 \end{proof}
  \begin{cor}\label{exist-forall}
  	Each $\exists\forall$-formula in $L_{\text{oag}}$ has an equivalent of the form:
  	\[
  	\exists x_1,\cdots,x_n \bigvee_k \bigwedge_t \psi_{k,t} 
  	\]
  	where each $\psi_{k,t}$ is the  negation of an $L''_{\text{oag}}$-atomic formula.
  \end{cor}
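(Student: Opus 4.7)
The plan is to reduce this corollary directly to the preceding quantifier-elimination theorem by a single application of De~Morgan's laws. Start with an arbitrary $\exists\forall$-formula, written in prenex form as $\exists \bar{x}\, \forall \bar{y}\, \chi(\bar{x},\bar{y})$ with $\chi$ quantifier-free in $L_{\text{oag}}$ (possibly with additional free parameter variables, which are carried along inertly). The first move is to rewrite $\forall \bar{y}\, \chi(\bar{x},\bar{y})$ as $\neg \exists \bar{y}\, \neg \chi(\bar{x},\bar{y})$. Since $\neg \chi$ is again quantifier-free, the formula $\exists \bar{y}\, \neg \chi(\bar{x},\bar{y})$ is an existential $L_{\text{oag}}$-formula whose free variables are exactly $\bar{x}$.

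Now I invoke the preceding theorem, which states that every existential $L_{\text{oag}}$-formula is equivalent to a conjunction of disjunctions of atomic $L''_{\text{oag}}$-formulas. Applied to $\exists \bar{y}\, \neg \chi(\bar{x},\bar{y})$, this produces an equivalent of the form $\bigwedge_k \bigvee_t \alpha_{k,t}(\bar{x})$ with each $\alpha_{k,t}$ atomic in $L''_{\text{oag}}$. Dualising by De~Morgan gives
\[
\forall \bar{y}\, \chi(\bar{x},\bar{y}) \;\equiv\; \neg \bigwedge_k \bigvee_t \alpha_{k,t}(\bar{x}) \;\equiv\; \bigvee_k \bigwedge_t \neg \alpha_{k,t}(\bar{x}).
\]
Setting $\psi_{k,t} := \neg \alpha_{k,t}$ and prefixing the existential block $\exists \bar{x}$ that survived from the original formula yields the normal form $\exists x_1,\ldots,x_n\; \bigvee_k \bigwedge_t \psi_{k,t}$ claimed in the statement.

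The entire argument is essentially a syntactic manipulation, so there is no substantial obstacle. The only point that requires care is to note that the previous theorem delivers precisely a conjunction of disjunctions of atomic $L''_{\text{oag}}$-formulas (and not some looser Boolean combination), because this is exactly the shape that dualises, under De~Morgan, into a disjunction of conjunctions of negated atomics. This matches the required form verbatim, completing the proposal.
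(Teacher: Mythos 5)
Your proposal is correct and is exactly the paper's argument: the paper's proof is the one-line instruction ``Replace $\forall$ with $\neg\exists\neg$, and use the above theorem on existential $L_{\text{oag}}$ formulas,'' which you have simply carried out in full detail, including the De~Morgan dualisation of the conjunction of disjunctions of atomics into a disjunction of conjunctions of negated atomics.
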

  \begin{proof}
Replace  $\forall$  with $\neg \exists \neg$, and use the above theorem on existential $L_{\text{oag}}$ formulas. 
  \end{proof}
  \subsection{Recalling groups constructed in \cite{main}}
  \label{Recalling-the-construction-in}
  As our construction is obtained by alteration of the example already given in
  \cite{main}, we find it necessary to first present their construction and investigate the reason it does not fit to our purpose. 
  \par 
  By $\mathbb{Z}_{(n)}$  denote the additive subgroup of the rational numbers consisting of elements
  $\frac{a}{b}$ where $b$ is not divisible by $n$.
  Representing $\mathbb{Z}_{(3)}$ with an square and $\mathbb{Z}_{(2)}$ with a circle,
   we let $\Gamma_1$ be the following ``direct sum":
  \[
  \square\square\square\cdots  \circ \square\square\square\cdots \circ \square\square\square\cdots \circ \cdots
  \]
  and
  $\Gamma_2$ be the following direct sum:
  \[
  \cdots \circ \square \circ \square\circ \square 
  \]
  and consider the group $\group$ with the lexicographic order as follows:
  \[
  \overbrace{{\cdots \circ \square \circ \square\circ \square}}^{\Gamma_2} |\overbrace{{\square}\square\square \cdots  {\circ} \square\square\square \cdots \circ \square\square\square\cdots \circ \cdots}^{\Gamma_1}
  \]
 In \cite[page 364]{main} 
two maps $f_1,f_2:\group\to \group$ are 
explicitly
introduced. 
We refer the reader to the definitions of the maps in there, and suffice here to visually explain them. The  function $f_1$ maps the first square of $\Gamma_1$
to the last (circle, square) of $\Gamma_2$---that is in $f_1(\group)$ the mentioned circle contains only zero---and shifts all shapes as below:
	\[
\xymatrix@C=.5mm@R=2mm{
	\cdots & \circ & \square & \circ\ar[dll] & \square\ar[dll] & \circ \ar[dll]&
	\square\ar[dll] & | & 
	\blacksquare \ar[dll]  & 
	\square \ar[dl]& \square\ar[dl]& \cdots & \circ\ar[d] & \square \ar[d]& \square \ar[d]& 
	\cdots & \circ \ar[d]& \cdots \\
	\cdots & \circ & \square & \circ & \square & \scalebox{1.8}{$\bullet$} &
	\blacksquare & | & 
	\square  & 
	\square &\square&  \cdots & \circ & \square & \square & 
	\cdots & \circ & \cdots 
}
\]
Also let $f_2$ be the following map which avoids a block of squares in its image:
	\[
\xymatrix@C=1mm@R=5mm{
	\cdots & \circ & \square & \circ\ar[drr] & \square\ar[drr] & \scalebox{1.8}{$\bullet$} \ar[drrrrrr]&
	\blacksquare\ar[drrrrrr] & | & 
	\square\ar[drrrrr]   & 
	\square &  \cdots & \circ & \square & \square & 
	\cdots & \circ & \cdots \\
	\cdots & \circ & \square & \circ & \square & \circ &
	\square & | & 
	\blacksquare  & 
	\blacksquare &  \cdots & \scalebox{1.8}{$\bullet$} & \blacksquare & \square & 
	\cdots & \circ & \cdots 
}
\]
It is proved in \cite{main}  that firstly, 
 $\Gamma_1$ is definable in $\Gamma_2\oplus \Gamma_1$, and secondly,
 $f_i$ are $\exists$-embeddings; meaning that
$f_i(\Gamma_2\oplus \Gamma_1)$ are $\exists$-closed in $\Gamma_2\oplus \Gamma_1$.
In the following, we first explain the reasons behind the mentioned facts, 
and then find a replacement for this example to suit our purpose.
\begin{notation}
	\label{I-a}
 Let $ G=\oplus_{i\in I} G_i $ be a direct sum of ordered abelian groups where $I$ is a linearly ordered index set.
	 We denote each element of $G$ by a boldface letter
	$\mathbf{a}$ where $\mathbf{a}=(a_i)_{i\in I}$. For such an element $\bf{a}$ by
	$I(\bf{a})$ we mean the smallest index $i$ such that $a_i\neq 0$.
	\end{notation}
	\begin{defn}
Let $G$ be an ordered abelian group and $a$ an element in $G$ not divisible by $n$. 
Define $H_a^n$ as follows:
\[
H^n_a=\{b: \forall y   \big(0<y<b\to y\not \stackrel{n}{\equiv} a\big) \}. 
\]		
	\end{defn}

   \begin{obs}
	\label{obs-psi-a-b}
	In a direct sum $G=\oplus_{i\in I} G_i$  with each $G_i$  an ordered abelian group and $I$ a linear order,
	the formula 
	\begin{align}
		\psi_n(\mathbf{a},\mathbf{b}):=    ``\bf{b}\in H^n_{\bf{a}}"
		\label{psi-a-b}
	\end{align}
	is equivalent to the fact that: 
	\begin{enumerate}
		\item $I(\bf{a})<I(\bf{b})$ and there is an element $a_j$ with $j<I(\bf{b})$ such that
		$a_j$ is not divisible by $n$, or
		\item $I(\bf{a})=I(\bf{b})=i$ and 
		$G_{i}\models \psi_n(a_i,b_i)$.
	\end{enumerate}
\end{obs}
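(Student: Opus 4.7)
The plan is to unfold $\psi_n(\mathbf{a},\mathbf{b})$ as ``no $\mathbf{y}\in G$ satisfies $0<\mathbf{y}<\mathbf{b}$ and $\mathbf{y}\equiv_n\mathbf{a}$'', use that $n$-congruence in a direct sum is componentwise (so $\mathbf{y}\equiv_n\mathbf{a}$ iff $y_j\equiv_n a_j$ in $G_j$ for every $j$), and then carry out a case analysis comparing $I(\mathbf{a})$ and $I(\mathbf{b})$. I would first establish the preliminary observation that any $\mathbf{y}$ with $0<\mathbf{y}<\mathbf{b}$ satisfies $y_j=0$ for every $j<I(\mathbf{b})$: otherwise the first nonzero coordinate of $\mathbf{y}$ would lie strictly before $I(\mathbf{b})$, be positive (because $\mathbf{y}>0$), and exceed the corresponding zero coordinate of $\mathbf{b}$, forcing $\mathbf{y}>\mathbf{b}$.

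With this preliminary the direction $(\Leftarrow)$ is short. In case~1, fix $j<I(\mathbf{b})$ with $a_j$ not $n$-divisible; any $\mathbf{y}\equiv_n\mathbf{a}$ has $y_j\equiv_n a_j\not\equiv_n 0$, hence $y_j\neq 0$, contradicting the preliminary. In case~2, set $i:=I(\mathbf{a})=I(\mathbf{b})$; the preliminary forces $y_j=0$ for $j<i$; the non-$n$-divisibility of $a_i$ in $G_i$ (implicit in the meaningfulness of $\psi_n(a_i,b_i)$) forces $y_i\neq 0$, positivity of $\mathbf{y}$ yields $y_i>0$, and the lexicographic comparison then yields $y_i<b_i$, producing a $G_i$-witness to $\neg\psi_n(a_i,b_i)$, a contradiction.

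For the direction $(\Rightarrow)$ I would argue contrapositively: assuming both conditions fail, construct a witness $\mathbf{y}$. The negation splits into three subcases: (a) $I(\mathbf{a})>I(\mathbf{b})$, handled by taking $\mathbf{y}=\mathbf{a}$, adjusted by a small $n$-multiple at an index beyond $I(\mathbf{b})$ if needed to secure positivity; (b) $I(\mathbf{a})<I(\mathbf{b})$ with every $a_j$ ($j<I(\mathbf{b})$) divisible by $n$; and (c) $I(\mathbf{a})=I(\mathbf{b})=:i$ with $G_i\not\models\psi_n(a_i,b_i)$, handled by lifting a component-level witness $y_i$ and padding with zeros at indices below $i$. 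The principal obstacle is subcase~(b): after setting $y_j=0$ for $j<I(\mathbf{b})$ (which satisfies the congruence at lower indices by hypothesis), one must exhibit a positive element of $G_{I(\mathbf{b})}$ strictly below $b_{I(\mathbf{b})}$ in the $n$-class of $a_{I(\mathbf{b})}$. This is precisely the failure of $\psi_n$ in the component; it is available in the paper's setting because each $G_i$ is a localization $\mathbb{Z}_{(p)}$ whose $p$-residue classes meet every nonempty positive interval. Once this critical coordinate is in hand, setting $y_j=a_j$ for $j>I(\mathbf{b})$ completes the witness without affecting the lex comparison with $\mathbf{b}$.
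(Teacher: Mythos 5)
The paper records this as an \emph{Observation} and gives no proof at all, so there is no official argument to compare against; judged on its own, your skeleton (componentwise reading of $\stackrel{n}{\equiv}$ in a direct sum, the preliminary that any witness $\mathbf{y}$ with $0<\mathbf{y}<\mathbf{b}$ vanishes at every index below $I(\mathbf{b})$, and the three-way case split) is the natural and essentially only route. However, one step is genuinely broken: in the $(\Leftarrow)$ direction, case~2, the lexicographic order gives only $y_i\le b_i$, not $y_i<b_i$. If $y_i=b_i$ and the tail of $\mathbf{y}$ lies below the tail of $\mathbf{b}$, you obtain no witness inside $G_i$ and hence no contradiction. This is not a removable technicality, because the stated equivalence fails for general components: in $\mathbb{Z}\oplus\mathbb{Z}$ with $n=2$, $\mathbf{a}=(1,0)$, $\mathbf{b}=(1,1)$, condition~2 holds (there is no integer strictly between $0$ and $1$, so $1\in H^2_1$ in $\mathbb{Z}$), yet $\mathbf{y}=(1,0)$ shows $\mathbf{b}\notin H^2_{\mathbf{a}}$. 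The density property you invoke only in subcase~(b) of the converse---that every $n$-congruence class of each $G_i$ meets every interval $(0,b)$ with $b>0$---is exactly what repairs case~2 as well, since it forces $b_i\not\stackrel{n}{\equiv}a_i$ whenever $b_i>0$ and $G_i\models\psi_n(a_i,b_i)$; it therefore has to be promoted to a standing hypothesis on the $G_i$ and used in both directions, not treated as an incidental aid in one subcase.

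You should also be careful about where that hypothesis holds. It is true for the components of Section~\ref{Recalling-the-construction-in} ($\mathbb{Z}_{(2)}$ and $\mathbb{Z}_{(3)}$ are archimedean and $n\mathbb{Z}_{(p)}$ contains arbitrarily small positive rationals), so with the repair above your argument, including subcase~(b), goes through there. But it fails for the modified squares $\mathbb{Z}\oplus\bigoplus_{i}\mathbb{Z}c_i$ of Section~\ref{key}: every element congruent to $1$ modulo $2$ has odd $\mathbb{Z}$-coefficient and therefore avoids the interval $(0,c_5)$. Correspondingly, the Observation read literally is false for those groups (take $\mathbf{a}$ supported on two squares with $a_{i_0}=2$ and $a_{i_1}=1$, and $\mathbf{b}$ supported at $i_1$ with $b_{i_1}=c_5$: neither condition holds, yet $\mathbf{b}\in H^2_{\mathbf{a}}$), which is precisely why the paper immediately refines the criterion in terms of the pairs $(I_n(\cdot),J_n(\cdot))$. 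So your proof should be presented as valid only under the dense-congruence-class assumption on the components, with an explicit remark that the version needed later for $\Lambda_2\oplus\Lambda_1$ is the refined one.
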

\begin{cor}
In $\group$, 
the formula
``$ \bf{b}\in H^2_{\bf{a}}\cup H^3_{\bf{a}} $", that is the formula $\psi_2(\bf{a},\bf{b})\vee \psi_3(\bf{a},\bf{b})$ implies $I(\bf{a})<I(\bf{b})$.
	\end{cor}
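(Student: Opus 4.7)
The plan is to invoke Observation \ref{obs-psi-a-b} and to rule out its `same-index' clause when $n\in\{2,3\}$, using the specific structure of the summands of $\group$. Assuming the meaningful case $\bf{b}>0$ so that $I(\bf{b})$ is defined and $b_{I(\bf{b})}>0$, Observation \ref{obs-psi-a-b} splits $\psi_n(\bf{a},\bf{b})$ into two cases: case (1) already yields $I(\bf{a})<I(\bf{b})$ directly, while case (2) forces $I(\bf{a})=I(\bf{b})=i$ together with $G_i\models\psi_n(a_i,b_i)$ in the corresponding summand $G_i$. Since each summand of $\group$ is a copy of either $\mathbb{Z}_{(2)}$ (a circle) or $\mathbb{Z}_{(3)}$ (a square), it is enough to rule out case (2) for these specific summands and for $n\in\{2,3\}$.

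The first half of the obstruction is arithmetic. In $\mathbb{Z}_{(p)}$ every integer coprime to $p$ is a unit, hence every element of $\mathbb{Z}_{(p)}$ is divisible by every such integer. Consequently every element of $\mathbb{Z}_{(3)}$ is divisible by $2$ and every element of $\mathbb{Z}_{(2)}$ is divisible by $3$. Since $G_i\models\psi_n(a_i,b_i)$ requires $a_i$ to be non-divisible by $n$ inside $G_i$, this instantly eliminates the combinations $(n,G_i)=(2,\mathbb{Z}_{(3)})$ and $(n,G_i)=(3,\mathbb{Z}_{(2)})$.

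The remaining combinations $(n,G_i)=(2,\mathbb{Z}_{(2)})$ and $(n,G_i)=(3,\mathbb{Z}_{(3)})$ are dispatched by the density of residue classes in $\mathbb{Z}_{(p)}$. For every $r\in\{1,\dots,p-1\}$ and every $b_i>0$ in $\mathbb{Z}_{(p)}$, the element $r/(pk+1)$ lies in $\mathbb{Z}_{(p)}$ (its denominator being coprime to $p$), has residue $r$ modulo $p$ (because $pk+1$ is a unit in $\mathbb{Z}_{(p)}/p\mathbb{Z}_{(p)}\cong\mathbb{F}_p$), and is positive and strictly less than $b_i$ as soon as $k$ is large enough. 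Hence for any positive $b_i\in\mathbb{Z}_{(p)}$ and any $a_i$ not divisible by $p$, one finds $0<y<b_i$ with $y\equiv a_i\pmod p$, contradicting $b_i\in H^p_{a_i}$. The only step demanding careful writing is this residue computation in $\mathbb{Z}_{(p)}$; no deeper obstacle is expected, and case (1) of Observation \ref{obs-psi-a-b} then delivers the desired inequality $I(\bf{a})<I(\bf{b})$.
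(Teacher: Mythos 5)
Your proof is correct and follows the same route the paper intends: the corollary is stated in the paper as an immediate consequence of Observation \ref{obs-psi-a-b} (no explicit proof is given), and the only content is ruling out the equal-index clause, which you do correctly via $2$-divisibility of $\mathbb{Z}_{(3)}$, $3$-divisibility of $\mathbb{Z}_{(2)}$, and the existence of arbitrarily small positive representatives of each nonzero residue class in $\mathbb{Z}_{(p)}$. Your explicit caveat restricting to $\mathbf{b}>0$ (to avoid the vacuous membership of non-positive elements in $H^n_{\mathbf{a}}$) is a reasonable reading that the paper leaves implicit.
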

	Although ``$ \bf{b}\in H^2_{\bf{a}}\cup H^3_{\bf{a}} $" is not equivalent to 
	$I(\bf{a})<I(\bf{b})$, we still use the notation
``$I(\bf{a})<I(\bf{b})$" to refer to the formula $\psi_2(\bf{a},\bf{b})\vee \psi_3(\bf{a},\bf{b})$. It is clear that
``$I(\bf{a})<I(\bf{b})$" is a $\forall$-formula.
In the following corollary, we only consider the map $f_1$ but the statement holds also for $f_2$
for a very similar reason. 
\begin{cor}\hfill
	\label{f-baste-vojodi}
	\begin{enumerate}
		\item $ f_1(\group) $ is $ \exists$-closed in $\group$. 
		\item $\Gamma_1$ is a definable subset of $ \group $ by a $\exists\forall\exists\forall$-formula in $L_{\text{oag}}$.
		\item $ f_1(\group) $ is \textit{not} $ \exists\forall $-closed in $ \group $.

	\end{enumerate}
\end{cor}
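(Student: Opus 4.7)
The plan is to address the three parts in turn; part~(3) contains the substantive new content.

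For part~(1), I would apply Fact~\ref{1va1 group} with $N = f_1(\group)$ viewed as a common subgroup of itself and $\group$, reducing the $\exists$-closedness assertion to checking that atomic $L''_{\text{oag}}$-formulas with parameters in $f_1(\group)$ are preserved. Ordering and congruence atoms are immediate because $f_1$ is an order-preserving group embedding and congruence classes transfer under inclusion. The substantive step is preservation of the predicates $R_\varphi$ of the form~\eqref{general-form-of-R-phi}: given $\bar a \in f_1(\group)$ and witnesses $\bar z_i < \bar a_i$ in $\group$ satisfying $\varphi$, one has to locate witnesses already inside $f_1(\group)$. Since $f_1(\group)$ agrees with $\group$ on all but a few high-valuation coordinates affected by the shift, any witness with support below $\bar a$ can be adjusted onto the unaffected coordinates without disturbing the congruences in $\varphi$, producing the required $f_1(\group)$-witnesses.

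For part~(2), the plan is to define $\Gamma_1$ by isolating the unique $\square\square$-adjacency separating the $\Gamma_2$- and $\Gamma_1$-blocks of the index set. Within $\Gamma_2$ the indices strictly alternate $\circ\square$, while the last index of $\Gamma_2$ and the first index of $\Gamma_1$ are both of $\square$-type ($\mathbb{Z}_{(3)}$), yielding exactly one $\square\square$-adjacency weakly above which all of $\Gamma_1$ sits. Using the formulas $\psi_n$ of Observation~\ref{obs-psi-a-b} (each $\forall$ in $L_{\text{oag}}$) together with the divisibility atoms, I would express ``$\bf{c}$ has $\square$-type index'' (via non-divisibility by~$3$) and ``$\bf{c},\bf{d}$ label adjacent indices'' (a universal saying no index lies strictly between). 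Then ``$\bf{b}\in\Gamma_1$'' reads: there exists $\bf{a}$ such that $\bf{a}$ and its right-neighbour are both of $\square$-type, no $\square\square$-adjacency occurs strictly below $I(\bf{a})$, and $I(\bf{a})$ does not exceed $I(\bf{b})$. The uniqueness clause contributes the extra $\forall\exists$ alternation, placing the full formula at the $\exists\forall\exists\forall$ level.

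Part~(3) is the main obstacle. The starting structural fact is the ``hole'' produced by $f_1$: the element $\bf{b}_0 := f_1(\bf{e}_1)$, where $\bf{e}_1$ is the first $\square$ of $\Gamma_1$, lies at a $\Gamma_2$-position when regarded inside $\group$, so $\bf{b}_0\notin\Gamma_1$, whereas inside the abstract group $f_1(\group)$ the same element $\bf{b}_0$ plays the role of the first $\square$ of $f_1(\Gamma_1)$. My plan is to exhibit an explicit $\exists\forall$-formula $\theta(y,\bar z)$ in $L_{\text{oag}}$ and parameters $\bar c\in f_1(\group)$ with $\group\models\theta(\bf{b}_0,\bar c)$ but $f_1(\group)\not\models\theta(\bf{b}_0,\bar c)$. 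A natural candidate asserts the existence of a witness $\bf{a}$ at a $\square$-index slightly below $\bf{b}_0$ around which the alternating $\circ\square$ pattern persists up to $\bf{b}_0$: in $\group$ such a witness is supplied by a nearby element of $\Gamma_2$, while in $f_1(\group)$ the analogous candidate from $f_1(\Gamma_2)$ has been displaced by the shift and no witness survives. The hardest step will be to keep the quantifier complexity at $\exists\forall$ rather than higher; I expect to accomplish this via Corollary~\ref{exist-forall}, which represents $\exists\forall$-formulas as existential boolean combinations of negated atomic $L''_{\text{oag}}$-formulas, so that the adjacency and congruence constraints can be packaged inside a single inner universal.
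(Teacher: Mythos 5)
Your parts (2) and (3) follow the paper's strategy in spirit, but there are two concrete gaps. First, in part (1) you take a genuinely different route: the paper does not verify preservation of the $R_\varphi$-atoms by hand at all. It observes that $f_1(\group)$ decomposes as a direct sum in which the only component that is a \emph{proper} subgroup of the corresponding component of $\group$ is $\mathbb{Z}_{(3)}$ sitting convexly inside $\mathbb{Z}_{(2)}\oplus\mathbb{Z}_{(3)}$ (the critical circle--square pair), and then cites two results of \cite{group}: convex subgroups are $\exists$-closed, and direct sums of $\exists$-closed extensions are $\exists$-closed. Your replacement step --- ``any witness with support below $\bar a$ can be adjusted onto the unaffected coordinates without disturbing the congruences in $\varphi$'' --- is precisely the nontrivial content of the claim and is not justified; note that the vacated coordinate is a copy of $\mathbb{Z}_{(2)}$, which is \emph{not} $2$-divisible, so zeroing out an odd entry there changes the class modulo $2$, and compensating at a more significant coordinate threatens the bound $0<\bar z_i<x_i$. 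Either supply this adjustment argument in detail or fall back on the cited corollaries of \cite{group}.

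Second, in part (3) you never actually write the formula, and you flag the quantifier-complexity issue as unresolved; this is the whole point of the item. The paper's resolution is short: the relation ``$I(\bf{u})<I(\bf{v})$'' (meaning $\bf{v}\in H^2_{\bf{u}}\cup H^3_{\bf{u}}$) is already a $\forall$-formula, so with \emph{two} parameters $\bf{c},\bf{b}\in f_1(\group)$ supported at the square positions immediately flanking the critical circle, the sentence $\exists x\;\big(``I(\bf{c})<I(x)"\wedge ``I(x)<I(\bf{b})"\big)$ is $\exists\forall$, holds in $\group$ (witnessed by an element supported at the critical circle and not divisible by $2$), and fails in $f_1(\group)$ because any witness must lead at the vacated coordinate. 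Your narrative about candidates being ``displaced by the shift'' slightly misidentifies the mechanism: as a subgroup of $\group$, $f_1(\group)$ is exactly $\bigoplus_{i\neq j}G_i$ for $j$ the critical circle, so the \emph{only} first-order visible difference is the vacancy at index $j$, and the two flanking parameters are what force the existential witness into that vacancy. With a single parameter $\bf{b}_0$ and an unquantified notion of ``slightly below,'' nothing prevents a witness elsewhere. Part (2) of your proposal matches the paper's argument (detect the first $\square\square$-adjacency via divisibility by $2$ and non-divisibility by $3$, adjacency, and minimality, then take $H^3_{\bf{a}}=\Gamma_1$), modulo the small slip that $I(\bf{a})\le I(\bf{b})$ should be strict so as to exclude the last square of $\Gamma_2$.
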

\begin{proof}While items 1 and 2 are already proved in \cite{main}, we include their proofs here to improve the readability of the rest of the article based on our notation. 
	For 1. note that the group $ \mathbb{Z}_{(3)} $ is a convex subgroup of $ \mathbb{Z}_{(2)}\oplus \mathbb{Z}_{(3)} $, hence $ \mathbb{Z}_{(3)} $ is $\exists$-closed
			in $ \mathbb{Z}_{(2)}\oplus \mathbb{Z}_{(3)} $ (Corollary 1.4, \cite{group}).
					It is known that whenever groups $ G $ and $ H $ are respectively $\exists$-closed in $ G' $ and $ H' $, then 
			$ G\oplus H $ 
			is also $\exists$-closed in $ G'\oplus H' $ (Corollary 1.7, \cite{group}). Hence the map $f_1$ is an $\exists$-embedding.
\par 
	For item 2, note that
		if $\bf{a},\bf{b}$ are elements with the following properties, then $a_{I(\bf{a})}$
		is in the last square of $\Gamma_2$ and $b_{I(\bf{b})}$ is in the first square of $\Gamma_1$:
		\begin{itemize}
			\item $ \bf{a}  $ and $\bf{b}$ are both divisible by 2 and not divisible by 3,
			\item ``$I(\bf{a})<I(\bf{b})$'',
			\item there is no $\bf{c}$ such that ``$I(\bf{a})<I(\bf{c})<I(\bf{b})$",
			\item ``$I(\bf{a})$ is minimal with these properties".
		\end{itemize}
	Then $H^3_{\bf{a}}=\Gamma_1$. The fact that the formula that defines $\Gamma_1$ is $\exists\forall\exists\forall$ is an easy check.
\par 
For item 3,
		denote  by $\mathbf{a}$ the element $(a_i)_{i\in I}$ with $a_j=a>0$ for
	the index $j$ of the critical circle, that is the circle which is zero in the image of $f_1$, and
	$a_i=0$ elsewhere.
	Assume further  that $a$, and hence $ \mathbf{a} $,
	is not divisible by 2.
	Also
	let $\mathbf{b}$ be an element in $ f(\Gamma_2 \oplus \Gamma_1)$
	such that  $b_{j+1}=b>0$ and the rest of the coordinates are zero.
	Let $\mathbf{c}$ be an element such that  $c_{j-1}=c>0$, and the rest of the coordinates are zero.  Assume further that $b$ and $c$ are not divisible by 3.
	Then $\mathbf{a}$ serves as a witness for the existential quantifier in the $\exists\forall$-formula below:
	     	\[
	\exists x\quad ``I(\bf{c})<I(x)<I(\bf{b})".
	\]
Obviously any witness to existential quantifier above, needs to be non-zero in the critical circle.
\end{proof}
\subsection{Modifying the Construction}
\label{key}
Let the squares be the group $\mathbb{Z}[c_1,c_2,\ldots]=
 \mathbb{Z}\oplus\bigoplus_{i\in \mathbb{N}} \mathbb{Z}c_i$, that is the group generated by
$ \mathbb{Z} $ together with
a countable set of elements $ c_i $.
We further impose the condition that
the $c_i$'s 
are infinitely small transcendental elements with $c_1>c_2>\ldots$
belonging to different archimedian  classes;
that is $nc_{i+1}<c_{i}$ for all  $i$ and all $n\in \mathbb{N}$. We also put the lexicographic order on each square.
Also let the circles be $\mathbb{Q}$.
Let $\Lambda_1,\Lambda_2$ be obtained similar to $\Gamma_1,\Gamma_2$ in Section \ref{Recalling-the-construction-in} and consider the group $\Group$.
 \begin{notation}
 	 Let $I(\bf{a})$ for an element $\bf{a}\in \Group$ be defined as in Notation \ref{I-a}.
 	 If $I(\bf{a})=i$ and $a_{i}$ is in a square, then let $J(\bf{a})$ be the smallest index $j$ such that $a_{ij}\neq 0$.
 	 Now we define
 	$I_n(\bf{a})$ to be the smallest index $i$ such that $a_i$ is not divisible by $ n $, when such index exists. Of course in this case $a_i$ belongs to a square.
 	Let $I_n(\bf{a})=i$ and $\bf{a}_i=(a_{ij})_j$. Define $J_n(\bf{a})$ to be the smallest index $j$ such that $a_{ij}$ is not divisible by $ n $, when such index exists.
 	For an element $\bf{a}\in \Group$ by $B(\bf{a})$ we mean the set of elements
 	$\bf{t}$ such that $\bf{0}<\bf{t}<\bf{a}$.
 \end{notation} 
 
 \begin{obs}In group $\Group$,
 	the formula $\psi_n(\mathbf{a},\mathbf{b})$ in \ref{obs-psi-a-b}  means that $(I_n(\bf{a}),J_n(\bf{a}))<(I_n(\bf{b}),J_n(\bf{b}))$ with the lexicographic
 	order.
 \end{obs}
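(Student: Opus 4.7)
The plan is to apply Observation \ref{obs-psi-a-b} in two nested layers, exploiting the hierarchical direct-sum structure of $\Group$. At the outer layer, $\Group = \bigoplus_i G_i$ where each $G_i$ is either a circle $\mathbb{Q}$ or a square $\mathbb{Z}[c_1,c_2,\ldots]$. At the inner layer, each square is itself a direct sum $\mathbb{Z} \oplus \bigoplus_j \mathbb{Z} c_j$, and the imposed condition $nc_{j+1} < c_j$ together with the $c_j$ lying in distinct archimedean classes ensures that the lexicographic order on the square is exactly the direct-sum order, so that Observation \ref{obs-psi-a-b} is applicable to the square as well.

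First I would invoke Observation \ref{obs-psi-a-b} at the outer level to reduce $\psi_n(\bf{a},\bf{b})$ to two mutually exclusive cases: (A) $I(\bf{a}) < I(\bf{b})$ together with some coordinate $a_j$ at $j < I(\bf{b})$ that is not divisible by $n$, or (B) $I(\bf{a}) = I(\bf{b}) = i$ with $G_i \models \psi_n(a_i,b_i)$. Since $\mathbb{Q}$ is $n$-divisible for every $n$, the non-$n$-divisible coordinate $a_j$ from case (A) must sit at a square position, whence $I_n(\bf{a}) \leq j < I(\bf{b}) \leq I_n(\bf{b})$ and we obtain strict inequality already in the first lex coordinate. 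Moreover, in case (B), $G_i$ cannot be a circle: in $\mathbb{Q}$ the relation $\overset{n}{\equiv}$ is trivial, so $\psi_n(a_i,b_i)$ would force $b_i = 0$, contradicting positivity.

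Next I would apply Observation \ref{obs-psi-a-b} a second time, this time to the square $G_i$, to unpack $\psi_n(a_i,b_i)$. This yields two sub-cases, namely either $J(a_i) < J(b_i)$ together with some inner coordinate strictly earlier than $J(b_i)$ failing to be $n$-divisible, or $J(a_i) = J(b_i) = j$ with $\mathbb{Z} \models \psi_n((a_i)_j,(b_i)_j)$. The first sub-case yields $I_n(\bf{a}) = I_n(\bf{b}) = i$ and $J_n(\bf{a}) < J_n(\bf{b})$, matching equality in the first lex coordinate and strict inequality in the second. The second sub-case is handled by a direct inspection of $\psi_n$ over $\mathbb{Z}$: writing $(a_i)_j = qn+r$ with $0 \leq r < n$, the formula $\psi_n$ forces either $r > 0$ and $(b_i)_j \leq r$, or $r=0$ and $(b_i)_j \leq n$; in both cases, reading off what the first non-$n$-divisible coordinates of $\bf{a}$ and $\bf{b}$ actually are leaves the lex comparison on $(I_n,J_n)$ intact.

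The main obstacle is precisely this innermost $\mathbb{Z}$-step: one has to check that $\psi_n$ over $\mathbb{Z}$ aligns correctly with the lex comparison on $(I_n,J_n)$, which requires a short case analysis on the residues modulo $n$ and on which coordinate first witnesses non-$n$-divisibility. The converse direction is obtained by reversing these reductions: assuming $(I_n(\bf{a}),J_n(\bf{a})) < (I_n(\bf{b}),J_n(\bf{b}))$, we land in exactly one of case (A), sub-case (B1), or sub-case (B2), and Observation \ref{obs-psi-a-b} (together with the $\mathbb{Z}$-analysis) delivers $\psi_n(\bf{a},\bf{b})$ in each.
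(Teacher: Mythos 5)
The paper states this observation without proof, so there is no argument of the authors' to compare yours against; judged on its own, your proof has genuine gaps, and in fact the biconditional fails as literally stated, so no argument along these lines can close without first amending the statement. The decisive problem is your converse direction. You assert that the hypothesis $(I_n(\bf{a}),J_n(\bf{a}))<(I_n(\bf{b}),J_n(\bf{b}))$ lands you in one of cases (A), (B1), (B2); it does not, because those cases are governed by the leading index $I(\bf{b})$, which can be strictly smaller than $I_n(\bf{b})$ when the leading coefficients of $\bf{b}$ are all divisible by $n$. Concretely, for $n=2$ pick squares $G_{i_1},G_{i_2},G_{i_3}$ with $i_1<i_2<i_3$, let $\bf{a}$ be the generator of the $\mathbb{Z}$-part of $G_{i_2}$, and let $\bf{b}=2e+u$ where $e,u$ generate the $\mathbb{Z}$-parts of $G_{i_1},G_{i_3}$. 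Then $I_2(\bf{a})=i_2<i_3=I_2(\bf{b})$, so the lexicographic inequality holds, yet $0<\bf{a}<\bf{b}$ and $\bf{a}\overset{2}{\equiv}\bf{a}$, so $\bf{b}\notin H^2_{\bf{a}}$ and $\psi_2(\bf{a},\bf{b})$ fails. Here $I(\bf{b})=i_1<i_2=I(\bf{a})$, so neither case (A) nor case (B) applies and reversing your reductions produces nothing. What is actually true for $\bf{b}>0$ and $n=2$ (and is what the paper's subsequent computation of $H_{\bf{a}}$ uses) is the equivalence of $\psi_2(\bf{a},\bf{b})$ with $(I_2(\bf{a}),J_2(\bf{a}))<(I(\bf{b}),J(\bf{b}))$: the right-hand side must involve the leading position of $\bf{b}$, not its first non-divisible position.

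The forward direction is also unsound in case (B2). Observation \ref{obs-psi-a-b} reduces $\psi_n(\bf{a},\bf{b})$ faithfully to a single summand only when that summand is the last one; otherwise a witness $y$ may agree with $\bf{b}$ in the coordinate under discussion and still satisfy $0<y<\bf{b}$ by being very negative in a later summand. For instance, with $\bf{a}=\bf{b}=u$ the generator of the $\mathbb{Z}$-part of a square, your nested reduction reports $\mathbb{Z}\models\psi_2(1,1)$ (true, since $(0,1)\cap\mathbb{Z}=\emptyset$), whereas in $\Group$ the element $y=u-2c_1$ of that square satisfies $0<y<\bf{b}$ and $y\overset{2}{\equiv}\bf{a}$, so $\psi_2(\bf{a},\bf{b})$ is false. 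Your residue analysis over $\mathbb{Z}$ is therefore testing the wrong condition: at a common leading position the correct requirement is $(b_i)_j<r$, which is vacuous for $n=2$ (so case (B2) contributes nothing) but realizable for $n\ge 3$ --- with $\bf{a}=2u$, $\bf{b}=u$ and $n=3$ one checks directly that $\psi_3(\bf{a},\bf{b})$ holds while the two pairs $(I_3,J_3)$ coincide, refuting the stated equivalence for $n\ge 3$. Moreover, in case (B2) the two pairs always coincide, so your claim that the strict lexicographic inequality is ``left intact'' there is false on its face. A repaired argument should bypass the two-layer reduction and work directly with the coset $\bf{a}+n\Group$: its positive elements are exactly those with leading position at most $(I_n(\bf{a}),J_n(\bf{a}))$ and, at that position, leading coefficient at least the least positive residue of $\bf{a}$ there; from this one reads off exactly which $\bf{b}$ admit a coset element in $(0,\bf{b})$, and one sees that the observation needs $(I(\bf{b}),J(\bf{b}))$ on its right-hand side.
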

 In the following by $H_{{a}}$ we mean $H^2_{a}$ as previously defined, and we also define
 \[
 H'_{{a}}=\left\{{t}: \exists {t}' \Big({0}<{t}'<{a}\wedge \forall z \big({0}<{z}<{t}\to{z}\not \Equiv{2} {t}' \big)\Big)
 \right\}.
 \]
 It is obvious that 
 \[ 
 H'_{{a}}=\bigcup_{{0}<{b}<{a}} H_{{b}}.
 \]
 For our group $\Group=\oplus_{i\in \mathbb{Z}} G_i$ where each $G_i$ is either circle or square,  $H_{\bf{a}}$ and
 $H'_{\bf{a}}$ can be determined as follows:
 \begin{enumerate}
 	
 	\item If $a_{I(\bf{a})}$ is in a circle then 
 	$H_{\bf{a}}=\emptyset$ if $\bf{a}$ is divisible by 2, and
 	$H_{\bf{a}}=\Big(\bigoplus_{j>0}\mathbb{Z}c_{J_2(\bf{a})+j}\Big) \oplus\bigoplus_{i>I_2(\bf{a})}G_i$ otherwise. In any case, $H'_{\bf{a}}=\Big(\bigoplus_{i>0}\mathbb{Z}c_i\Big)\oplus \bigoplus_{i>I(\bf{a})+1} G_i$.
 	\item If $a_{I(\bf{a})}$ is in a square, and $\bf{a}$ is divisible by 2 then $H_{\bf{a}}=\emptyset$.
 	If $\bf{a}$ is not divisible by 2 then $H_{\bf{a}}=\Big(\bigoplus_{j>0} \mathbb{Z}c_{J_2(\bf{a})+j} \Big)\oplus  \bigoplus_{i>I_2(\bf{a})}G_i$.
 	In any case,  
 	$H'_{\bf{a}}=\Big(\bigoplus_{i>J(\bf{a})}\mathbb{Z}c_i\Big)\oplus \bigoplus_{i>I(\bf{a})} G_i$.
 \end{enumerate}
 \begin{remark}
 	It is important to notice that $H'_{\bf{a}}$ only depends on $a_{I(\bf{a})}$.
Also, based on the above, 	we leave it to the reader to verify that the situation in
 	the proof of item 3 of Corollary \ref{f-baste-vojodi} does not occur in this case.
 \end{remark}
 \begin{thm}
 	$\Lambda_1$ is definable in $\Group$ without parameters in $L_{\rm oag}$.
 \end{thm}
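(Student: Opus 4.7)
The plan is to adapt the strategy used to prove item~(2) of Corollary~\ref{f-baste-vojodi}. In~\cite{main}, $\Gamma_1$ is defined by locating the last square of $\Gamma_2$ via an $\exists\forall\exists\forall$-formula and then using the identity $H^3_{\mathbf{a}}=\Gamma_1$ for a witness~$\mathbf{a}$. In our setting, however, a single $H_{\mathbf{a}}$ is strictly larger than $\Lambda_1$, because the last square of $\Lambda_2$ now contains the infinitesimal elements $c_i$ which slip beneath any fixed constant; so we shall intersect many $H'_{\mathbf{a}}$ via a universal quantifier.

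The first step is to build an $L_{\mathrm{oag}}$-formula $\theta(\mathbf{a})$ that singles out exactly the positive $\mathbf{a}\in\Group$ with $I(\mathbf{a})$ equal to the last square of~$\Lambda_2$. Modelled on~\cite{main}, $\theta(\mathbf{a})$ asserts the existence of a companion $\mathbf{b}>0$ such that $\mathbf{a}$ and $\mathbf{b}$ are not divisible by~$2$; $\psi_2(\mathbf{a},\mathbf{b})$ holds; no $\mathbf{c}>0$ satisfies $\psi_2(\mathbf{a},\mathbf{c})\wedge\psi_2(\mathbf{c},\mathbf{b})$; and $I(\mathbf{a})$ is minimal among configurations of this form. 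Because $\mathbb{Q}$ is fully $2$-divisible while $\mathbb{Z}[c_1,c_2,\ldots]$ is not, the non-$2$-divisibility of $\mathbf{a},\mathbf{b}$ places $I_2(\mathbf{a})$ and $I_2(\mathbf{b})$ inside squares, and the remaining clauses, reasoned as in~\cite{main} and reinforced by the extra clause discussed below, pin $I(\mathbf{a})$ at the last square of $\Lambda_2$ and $I(\mathbf{b})$ at the first square of $\Lambda_1$.

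I then set $\chi(\mathbf{b}):=\forall\mathbf{a}\,\bigl(\theta(\mathbf{a})\to \mathbf{b}\in H'_{\mathbf{a}}\bigr)$. By the structural description immediately preceding the theorem, any $\mathbf{a}$ with $\theta(\mathbf{a})$ satisfies
\[
H'_{\mathbf{a}}=\Bigl(\bigoplus_{i>J(\mathbf{a})}\mathbb{Z}c_i\Bigr)\oplus\Lambda_1,
\]
the first summand sitting inside the last square of~$\Lambda_2$. Since $\theta$ admits witnesses with $a_{I(\mathbf{a})}=c_K$ for arbitrary $K\ge 1$, and every element of $\mathbb{Z}[c_1,c_2,\ldots]$ has finite support in the $c_i$, the intersection $\bigcap_{\theta(\mathbf{a})}H'_{\mathbf{a}}$ collapses to~$\Lambda_1$. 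Consequently $\chi$ defines $\Lambda_1$ in $\Group$ without parameters.

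The delicate point is the construction and verification of $\theta$. Two complications absent from~\cite{main} appear here. First, because the circle group $\mathbb{Q}$ is fully divisible, the clause ``no~$\mathbf{c}$'' forbids only squares strictly between $I_2(\mathbf{a})$ and $I(\mathbf{b})$ but not intermediate circle indices, so by itself it is too weak to enforce strong adjacency. Second, consecutive squares are abundant inside the $\Lambda_1$-blocks, so the minimality clause must prefer the boundary over any such interior pair. Both points are handled by augmenting $\theta$ with an additional clause, expressible through the $R_\varphi$-predicates of the $L''_{\mathrm{oag}}$-language of the previous section, which forces the alternating pattern $\cdots\circ\square\circ\square$ to persist arbitrarily far to the left of $I(\mathbf{a})$ — a pattern that occurs nowhere inside~$\Lambda_1$. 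Once this clause is in place, the remainder of the verification reduces to the same combinatorial bookkeeping as in~\cite{main}.
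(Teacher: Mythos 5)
Your overall architecture (first pin down, by a parameter-free formula, the elements $\mathbf a$ sitting at the $\Lambda_2/\Lambda_1$ boundary, then recover $\Lambda_1$ from the sets $H'_{\mathbf a}$) is the same as the paper's, and your final step is sound and even slightly different in flavour: you take the \emph{intersection} $\bigcap_{\theta(\mathbf a)}H'_{\mathbf a}$ over witnesses with $J(\mathbf a)$ unbounded, whereas the paper forces its witness $\mathbf x$ into the $\mathbb Z$-part ($J(\mathbf x)=0$) of the \emph{first square of $\Lambda_1$} and outputs the single set $H'_{\mathbf x}\cup\{t:H'_t=H'_{\mathbf x}\}$. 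Granting a correct boundary formula, either closing move works.

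The genuine gap is the formula $\theta$ itself, which is where all the difficulty of this theorem lives. Your adjacency clause ``no $\mathbf c>0$ satisfies $\psi_2(\mathbf a,\mathbf c)\wedge\psi_2(\mathbf c,\mathbf b)$'' is transplanted from \cite{main}, where the squares are archimedean. In $\Group$ the squares are $\mathbb Z\oplus\bigoplus_i\mathbb Z c_i$, and (as recorded in the observation refining \ref{obs-psi-a-b} for $\Group$) $\psi_2(\mathbf a,\mathbf c)$ compares the \emph{pairs} $(I_2(\mathbf a),J_2(\mathbf a))<(I_2(\mathbf c),J_2(\mathbf c))$ lexicographically. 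Consequently, whenever $\mathbf a$ and $\mathbf b$ lie in distinct squares there is always an interpolating $\mathbf c$: take $\mathbf c$ supported on $c_{J_2(\mathbf a)+1}$ inside $\mathbf a$'s own square, so that $(I_2(\mathbf a),J_2(\mathbf a))<(I_2(\mathbf a),J_2(\mathbf a)+1)<(I_2(\mathbf b),J_2(\mathbf b))$. So your ``no $\mathbf c$'' clause is \emph{never} satisfiable across a square boundary; $\theta$ is unsatisfiable and $\chi(\mathbf b)=\forall\mathbf a\,(\theta(\mathbf a)\to\mathbf b\in H'_{\mathbf a})$ then defines all of $\Group$. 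The difficulty you flag (circles slipping in between, runs of consecutive squares inside $\Lambda_1$) is real but secondary; the primary obstruction is the infinitude of archimedean classes inside each square, and your proposed repair --- an unstated $R_\varphi$-clause forcing the pattern $\cdots\circ\square\circ\square$ ``arbitrarily far to the left'' --- neither addresses it nor is shown to be expressible by a single first-order formula. The paper's proof exists precisely to get around this point: it abandons adjacency altogether and replaces it by the limit-type condition \eqref{that}, which says that every position strictly between $\mathbf y$ and $\mathbf x$ (measured via the definable sets $H'$) has a further position strictly between it and $\mathbf x$; this characterizes $\mathbf x$ as sitting at the $\mathbb Z$-part of the first square of $\Lambda_1$, the left limit of the $J$-positions of the last square of $\Lambda_2$. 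To make your proof work you would have to either adopt such a density condition or otherwise produce an explicit first-order $\theta$; as written, the central definability claim is not established.
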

 \begin{proof}
 	In the following we will write formulas with variables $(x,y)$ such that
 	if $(\bf{x},\bf{y})\in \Group$ satisfies these formulas, then $y_{I(\bf{y})}$ and $x_{I(\bf{x})}$ are   respectively in the last square of $\Lambda_2$ and 
 	first square of $\Lambda_1$.
 	First note  that if an element $\bf{t}\in \Group$ satisfies the following formula
 	$\psi(t)$, then ${t}_{I(\bf{t})}$ is in a square:
 	\begin{align*}
\exists z \left(z\in B(t) \wedge	 \forall z' \left( z'\in H'_t \to z\overset{2}{\not\equiv} z'\right)	\right).
 	\end{align*}
 	So our first condition is:
 	\begin{align}
 		\label{this}
 	\psi(x)\wedge \psi(y).
 	\end{align}
 	Now consider the following condition:
 	\begin{align}
 		\label{that} 
 		H'(x)\subsetneq H'(y) \wedge
 	\forall  z \left(z\in H'_y-(H'_x\cup  \{t:H'_t=H'_x\}) \to  \exists z' ( H'_x\subsetneq H'_{z'}\subsetneq H'_z)\right).	
 	\end{align}
 	If a tuple $(\bf{x},\bf{y})\in \Group$ satisfies 
 	conditions \eqref{this} and \eqref{that}, then  both $\bf{x},\bf{y}$ start in squares, the square for $\bf{x}$ comes after the square for $\bf{y}$, and ${x}_{I(\bf{x})}$ is in $\mathbb{Z}$, that is the beginning of the square.
 	\par 
 	Finally we add the conditions that both $H'_y$ and $H'_x$ are maximal with respect to the mentioned conditions.
 	Then $\Lambda_1$ is definable by being equal to $H'_{x}\cup \{t:H'_{{t}}=H'_{{x}}\}$ for all $(x,y)$
 	satisfying the conditions above.
 	 \end{proof}
 	 Let $f_1$ and $f_2$ be the maps introduced in the previous section, of course adjusted for our new
 	 squares and circles.
 	  Clearly, $f_i(\Group) $ for $i=1,2$ is $\exists$-closed in $\Group$. We aim to prove that
 	 $f_i(\Group)$ for $i=1,2$ is indeed $\exists\forall$-closed in $\Group$. 
 	 \par 
In observation \ref{obs-psi-a-b}
we interpreted the meaning of the formula $\psi(\bf{a},\bf{b})$---which is already an instance of a formula $\neg R_{\varphi}$.
In the following we investigate the meaning of a formula
$\neg R_{\varphi}$, for  a formula $R_{\varphi}$ as in  Equation \eqref{general-form-of-R-phi} in its general form. 
The following example would be beneficial as it treats a special case:
\begin{example} In $ \Group $
	the  formula:
	\[
	\neg \Big(\exists z_1, z_2 \ ( 0<z_1<\bf{a}_1  \wedge  0<z_2<\bf{a}_2  \wedge 
	z_1\overset{2}{\equiv} \bf{b}_1 \wedge z_2\overset{2}{\equiv} \bf{b}_2 \wedge z_1\overset{2}{\equiv} z_2)\Big)
	\]
	is equivalent to one of the following:
	\begin{enumerate}
		\item $ (I_2(\bf{b}_1),J_2(\bf{b}_1))<(I_2(\bf{a}_1),J_2(\bf{a}_1)) $,
		\item $ (I_2(\bf{b}_2),J_2(\bf{b}_2))<(I_2(\bf{a}_2),J_2(\bf{a}_2)) $,
		\item $\bf{b}_1\not \overset{2}{\equiv} \bf{b}_2$.
	\end{enumerate}
\end{example}

\begin{cor}
	\label{cor-general-neg-Rphi}
	In $ \Group $,
	the general case of a formula of the form
	$\neg R_{\varphi}(\bf{a}_1,\ldots,\bf{a}_n;\bf{b}_1,\ldots,\bf{b}_m)$ for $\varphi$ as in Equation \eqref{general-form-of-R-phi}
	describes instances of $(I_n(\bf{b}_i),J_n(\bf{b}_i))<(I_n(\bf{a}_j),J_n(\bf{a}_j))$, or $\bf{b}_i\not \overset{n}{\equiv} \bf{b}_j$.
\end{cor}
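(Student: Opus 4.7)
The plan is to extend the analysis in the preceding example from the binary congruence case to the general form of $R_{\varphi}$. Recall that $R_{\varphi}$ asserts the existence of tuples $\bar{z}_i$ in the boxes $(0,\bf{a}_i)$ together with auxiliary variables $\bar{u}_l$ in $\Group$ satisfying a conjunction $\psi$ of congruence equations modulo $n$ among $\bar{z}$, $\bar{u}$ and the parameters $\bf{b}_k$. My first step is to eliminate the inner existentials over $\bar{u}$: since every atomic formula in $\psi$ is $\mathbb{Z}/n\mathbb{Z}$-linear, standard row reduction over $\mathbb{Z}/n\mathbb{Z}$ replaces $\psi$ by an equivalent conjunction of (i) congruences involving only $\bar{z}$ and the $\bf{b}_k$, and (ii) congruences purely among the $\bf{b}_k$, arising when $\bar{u}$ is completely eliminated from a subsystem.

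Failures of the conditions of type (ii) are non-trivial $\mathbb{Z}/n\mathbb{Z}$-linear dependencies among the $\bf{b}_k$, and these can be unpacked into disjunctions of atomic clauses of the form $\bf{b}_i \not\Equiv{n} \bf{b}_j$ claimed in the corollary. For the remaining type-(i) conditions, each congruence forces some $\mathbb{Z}/n\mathbb{Z}$-linear combination of components of $\bar{z}$ to lie in a fixed coset $\bf{c} + n\Group$, where $\bf{c}$ is a $\mathbb{Z}/n\mathbb{Z}$-linear combination of the $\bf{b}_k$. The observation immediately following the definition of $J_n$ (which specialises Observation~\ref{obs-psi-a-b} to $\Group$) shows that such a coset admits a positive representative bounded above by $\bf{a}_j$ precisely according to where $(I_n(\bf{c}),J_n(\bf{c}))$ sits relative to $(I_n(\bf{a}_j),J_n(\bf{a}_j))$ in the lex order on $(I,J)$ pairs; the threshold is taken at $(I_n(\bf{a}_j),J_n(\bf{a}_j))$ rather than at $(I(\bf{a}_j),J(\bf{a}_j))$ because positions strictly more significant than $(I_n(\bf{a}_j),J_n(\bf{a}_j))$ within $\bf{a}_j$ are divisible by $n$ and hence impose no constraint on which $n$-coset a positive element smaller than $\bf{a}_j$ may belong to. The failure of this comparison produces exactly the atomic clauses $(I_n(\bf{b}_i),J_n(\bf{b}_i)) < (I_n(\bf{a}_j),J_n(\bf{a}_j))$.

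Combining the two sources, $\neg R_{\varphi}$ is a disjunction of the two claimed types of atomic conditions, as required. The main obstacle is handling case (i) when $\bf{c}$ involves several $\bf{b}_k$ simultaneously: one must exploit the componentwise structure of $\Equiv{n}$ in $\Group$ (trivial on the circles $\mathbb{Q}$ because they are $n$-divisible, componentwise modulo $n$ on the squares $\mathbb{Z}[c_1,c_2,\ldots]$) to verify that the obstruction position of $\bf{c}$ is the $(I_n, J_n)$-maximum over the individual $\bf{b}_k$ contributing to $\bf{c}$ with coefficient non-zero modulo $n$. Once this lex bookkeeping is confirmed, the condition $(I_n(\bf{c}),J_n(\bf{c})) < (I_n(\bf{a}_j),J_n(\bf{a}_j))$ is rephrased as a disjunction over the relevant individual $\bf{b}_k$, reducing the general case to the special situation already treated in the example preceding the corollary.
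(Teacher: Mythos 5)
The paper offers no proof of this corollary at all --- it is stated as an extrapolation from the preceding (also unproved) Example --- so you are supplying an argument where the authors left a blank. Your overall strategy is the natural one and the one the Example points to: eliminate the auxiliary variables $u_1,\ldots,u_r$, reduce $\psi$ to congruence constraints tying each $\bar z_i$-component to the parameters, and then invoke Observation \ref{obs-psi-a-b} (in its $(I_n,J_n)$ form) to decide when the relevant coset meets the box $(0,\mathbf{a}_j)$. That skeleton is sound and consistent with the paper's intent.

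However, the step you yourself flag as the main obstacle is where the argument breaks. First, ``row reduction over $\mathbb{Z}/n\mathbb{Z}$'' is not available verbatim: $\mathbb{Z}/n\mathbb{Z}$ is not a field for composite $n$, and eliminating $u$ from a congruence such as $mu\Equiv{n} c$ yields a condition modulo a divisor of $n$ (namely $c\Equiv{\gcd(m,n)}0$), so the surviving atomic clauses need not all carry the modulus $n$ appearing in the corollary. Second, and more seriously, your resolution of the multi-parameter case is incorrect: for $\mathbf{c}=\sum_k\lambda_k\mathbf{b}_k$ the position $(I_n(\mathbf{c}),J_n(\mathbf{c}))$ is \emph{not} in general an extremum of the individual $(I_n(\mathbf{b}_k),J_n(\mathbf{b}_k))$, because cancellation modulo $n$ at the most significant common position (e.g.\ $\mathbf{c}=\mathbf{b}_1-\mathbf{b}_2$ with $\mathbf{b}_1\Equiv{n}\mathbf{b}_2$ on their leading coordinates but not globally) pushes $I_n(\mathbf{c})$ strictly deeper. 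In that situation $\neg R_\varphi$ is governed by the position of the combination $\mathbf{c}$, which cannot be rewritten as a disjunction of the two atomic types the corollary allows; your ``lex bookkeeping'' claim would therefore prove something false if congruences between arbitrary $\mathbb{Z}$-linear terms were permitted. The repair is to use the paper's actual definition: a congruence equation is of the form $\exists t\ (y-x=nt)$ between \emph{single variables}, so $\psi$ only relates variables pairwise, elimination of the $u_l$ is mere transitive closure (no coefficients, no gcd's, no linear combinations of several $\mathbf{b}_k$), and each $z$-component ends up congruent to a single $\mathbf{b}_k$ or to other $z$'s only. Under that reading your first and third paragraphs already suffice and the ``main obstacle'' never arises; as written, though, the proposal both over-generalises the hypothesis and mishandles the resulting general case.
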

\begin{obs}\hfill
	\label{index-lemma}
	Let $ \bf{a}\in \Group $ be given such that $(I_n(\bf{a}),J_n(\bf{a}))$ exists. Then there are $\bf{c},\bf{d}\in f_1(\Group)$ such that
	whenever $\bf{c}<\bf{u}<\bf{d}$, we have $(I_
	n(\bf{u}),J_n(\bf{u}))=(I_n(\bf{a}),J_n(\bf{a}))$.
\end{obs}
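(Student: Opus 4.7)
The plan is to take $\mathbf{c}$ and $\mathbf{d}$ to be symmetric perturbations of a ``template'' element $\mathbf{v} \in f_1(\Group)$ that reproduces the data of $\mathbf{a}$ on the initial segment up through position $(i_0, j_0)$, where $i_0 := I_n(\mathbf{a})$ and $j_0 := J_n(\mathbf{a})$. The perturbation will be supported strictly beyond position $i_0$, so that a lexicographic-pinching argument forces every $\mathbf{u}$ with $\mathbf{c} < \mathbf{u} < \mathbf{d}$ to coincide with $\mathbf{v}$ on every coordinate of index less than the support of the perturbation---in particular on the data that determines $(I_n, J_n)$.

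Concretely, I first observe that $i_0$ is necessarily a square position, since every element of a circle $\mathbb{Q}$ is $n$-divisible; in particular $i_0 \neq p$, where $p$ denotes the unique critical circle (the position zeroed out in the image of $f_1$). Define $\mathbf{v} \in \Group$ by $v_i = a_i$ for $i < i_0$ with $i \neq p$, $v_p = 0$, $v_{i_0, k} = a_{i_0, k}$ for $k \leq j_0$, and zero at every other coordinate. Since $v_p = 0$, inspection of the shift description of $f_1$ gives $\mathbf{v} \in f_1(\Group)$. Now pick any square position $i^{\ast}$ with $i^{\ast} > i_0$ and $i^{\ast} \neq p$, and let $\boldsymbol{\varepsilon} \in \Group$ have its only nonzero entry $\varepsilon_{i^{\ast}} = 1$ at the constant term of the square at $i^{\ast}$. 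Then $\varepsilon_p = 0$, so $\boldsymbol{\varepsilon} \in f_1(\Group)$, and consequently both $\mathbf{c} := \mathbf{v} - \boldsymbol{\varepsilon}$ and $\mathbf{d} := \mathbf{v} + \boldsymbol{\varepsilon}$ lie in the subgroup $f_1(\Group)$, with $\mathbf{c} < \mathbf{d}$.

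For the verification, fix $\mathbf{u}$ with $\mathbf{c} < \mathbf{u} < \mathbf{d}$. Since $\mathbf{c}$ and $\mathbf{d}$ agree at every index $i < i^{\ast}$, a routine lex-order pinching (if $\mathbf{u}$ first differed from their common value at some $j < i^{\ast}$ then it would violate either $\mathbf{u} > \mathbf{c}$ or $\mathbf{u} < \mathbf{d}$) gives $u_i = v_i$ for all $i < i^{\ast}$; in particular $u_{i_0} = v_{i_0}$. For $i < i_0$, the value $u_i = v_i$ is $n$-divisible---equal to $a_i$ (which is $n$-divisible by the choice of $i_0$) when $i \neq p$, and equal to $0$ when $i = p$. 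Moreover $v_{i_0}$ matches $a_{i_0}$ on every coordinate $k \leq j_0$ and vanishes beyond, so $J_n(u_{i_0}) = J_n(a_{i_0}) = j_0$. Combining these yields $(I_n(\mathbf{u}), J_n(\mathbf{u})) = (i_0, j_0)$, as required.

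The main subtlety is the treatment of the critical circle $p$ in the template: when $p < i_0$ one must overwrite $a_p$ by $0$ in $\mathbf{v}$ in order to keep $\mathbf{v}$ inside $f_1(\Group)$, and then argue that this forced zeroing is harmless for the divisibility bookkeeping---which it is, because the circles take values in the divisible group $\mathbb{Q}$ and so no coordinate at a circle ever fails to be $n$-divisible. Everything else is routine bookkeeping on the lexicographic order.
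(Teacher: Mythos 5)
Your construction does prove the statement exactly as it is worded, and the two ingredients are sound: since $f_1$ only shifts whole circle/square components and its image is precisely the set of elements vanishing at the critical circle $p$, your $\mathbf{v}$, $\boldsymbol{\varepsilon}$, $\mathbf{c}=\mathbf{v}-\boldsymbol{\varepsilon}$ and $\mathbf{d}=\mathbf{v}+\boldsymbol{\varepsilon}$ all lie in $f_1(\Group)$, and the lexicographic pinching correctly forces every $\mathbf{u}\in(\mathbf{c},\mathbf{d})$ to agree with $\mathbf{v}$ below $i^*$, which pins $(I_n,J_n)$ to $(i_0,j_0)$. The gap is that your interval need not contain $\mathbf{a}$ itself, and that containment is what the observation is for: in Corollary \ref{crucial-natije-r-phi} and the proof of Theorem \ref{exists-forall-closed-f1} it is used to trap a given witness of $\neg R_\varphi$ between two elements of $f_1(\Group)$, so that the condition $(I_n(x),J_n(x))=(i_0,j_0)$, which that witness satisfies, is replaced by inequalities the \emph{same} witness still satisfies. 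With your choice, $\mathbf{d}-\mathbf{a}=(\mathbf{v}-\mathbf{a})+\boldsymbol{\varepsilon}$, and $\mathbf{v}-\mathbf{a}$ is supported exactly on the coordinates you truncated away; whenever $\mathbf{a}$ has a nonzero coordinate strictly between $(i_0,j_0)$ and $i^*$ --- already at the inner position $(i_0,j_0+1)$ of the same square, let alone at the critical circle when $p<i_0$ --- that coordinate is the leading one of $\mathbf{v}-\mathbf{a}\pm\boldsymbol{\varepsilon}$, and depending on its sign you get $\mathbf{a}>\mathbf{d}$ or $\mathbf{a}<\mathbf{c}$.

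The repairable part should be repaired: place the perturbation not at a fresh component $i^*>i_0$ but at the inner coordinate $(i_0,j_0+1)$ of the square $i_0$, with amplitude $|a_{i_0,j_0+1}|+1$. Then $\mathbf{a}-\mathbf{c}$ and $\mathbf{d}-\mathbf{a}$ both have positive leading coefficient at $(i_0,j_0+1)$, so $\mathbf{c}<\mathbf{a}<\mathbf{d}$, while the pinching still forces agreement with $\mathbf{a}$ up to $(i_0,j_0)$, which is all that $(I_n,J_n)$ depends on. What cannot be repaired is the critical circle: if $a_p\neq 0$ and $p<i_0$, then any $\mathbf{c},\mathbf{d}\in f_1(\Group)$ with $\mathbf{c}<\mathbf{a}<\mathbf{d}$ must first differ at some position below $p$ (they cannot agree with $\mathbf{a}$ at $p$), and then the open interval contains elements of the form $\mathbf{a}+\mathbf{w}$ with $\mathbf{w}$ supported just after $p$, whose $(I_n,J_n)$ can be made different from $(i_0,j_0)$. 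So the sandwiched version of the observation is simply false for such $\mathbf{a}$, and the argument only goes through for elements vanishing on the critical circle (or after adjusting the witness as in Remark \ref{remark-essential}). You should have stated explicitly which version you are proving and checked it against the downstream use; as written, your proof establishes a statement too weak to do the job it is assigned in Section \ref{group-construction}.
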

\begin{cor}
\label{crucial-natije-r-phi}
	In $\Group$
every formula $\neg R_{\varphi}$
	can be written as a conjunction of non-congruence equations and inequalities like 
	$\mathbf{c}<x<\mathbf{d}$,
	 where
	  $\mathbf{c},\mathbf{d}\in f_1(\Group)$.
\end{cor}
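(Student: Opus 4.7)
The proof will proceed by combining Corollary~\ref{cor-general-neg-Rphi} with Observation~\ref{index-lemma}. First, Corollary~\ref{cor-general-neg-Rphi} already reduces any formula $\neg R_{\varphi}(\bf{a}_1,\ldots,\bf{a}_n;\bf{b}_1,\ldots,\bf{b}_m)$ to a Boolean combination of atomic conditions of only two kinds: non-congruences $\bf{b}_i\not\overset{n}{\equiv}\bf{b}_j$ and lex-comparisons $(I_n(\bf{b}),J_n(\bf{b}))<(I_n(\bf{a}),J_n(\bf{a}))$. The non-congruences already have the form demanded by the statement, so the remaining work is to re-express every lex-comparison as a conjunction of interval inequalities $\bf{c}<x<\bf{d}$ with $\bf{c},\bf{d}\in f_1(\Group)$.

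The translation of a lex-comparison is carried out via Observation~\ref{index-lemma}: for a free variable $x$ with an intended value $(I_n(x),J_n(x))=(i_0,j_0)$, the observation supplies elements $\bf{c}_{i_0,j_0},\bf{d}_{i_0,j_0}\in f_1(\Group)$ bracketing exactly the locus of points with that index pair. Then a lex-comparison such as $(I_n(\bf{b}),J_n(\bf{b}))<(I_n(x),J_n(x))$, once the intended $(I_n(x),J_n(x))$ is fixed to a specific $(i_0,j_0)$ above the threshold $(I_n(\bf{b}),J_n(\bf{b}))$, becomes the pair of interval inequalities $\bf{c}_{i_0,j_0}<x<\bf{d}_{i_0,j_0}$. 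Taking the conjunction over all variables and over all lex-comparisons appearing in a single case yields the conjunctive form asserted by the corollary.

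The step I expect to be the main obstacle is reconciling the statement's ``conjunction'' with the apparent disjunction introduced by the lex-comparisons: a single lex-comparison permits a whole \emph{range} of admissible index pairs for a free variable, so naive replacement via Observation~\ref{index-lemma} produces a disjunction of interval conditions rather than a conjunction. My plan for circumventing this is to perform the case-split on index pairs \emph{up front}: starting from the disjunction provided by Corollary~\ref{cor-general-neg-Rphi}, one further disjoins over all admissible index pairs for each variable, so that within each resulting branch every variable has a fully determined index pair. Inside each such branch, the lex-comparisons collapse to honest conjunctions of interval inequalities of the form the corollary asks for. This branchwise reading is exactly what is needed downstream in proving that $f_1(\Group)$ is $\exists\forall$-closed in $\Group$, since each branch can then be handled separately by an $\exists$-embedding argument into a saturated elementary extension, via Corollary~\ref{ex-closed}.
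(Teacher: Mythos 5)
Your proposal is correct and matches the argument the paper intends (the paper leaves this corollary without an explicit proof): one combines Corollary~\ref{cor-general-neg-Rphi} with Observation~\ref{index-lemma}, keeping the non-congruences as they are and replacing each lex-comparison by the bracketing interval $\mathbf{c}<x<\mathbf{d}$ supplied by the observation. Your observation that the lex-comparisons really force a branchwise (disjunctive) reading, with the conjunctive form holding only after the index pair of each variable is fixed, is exactly how the corollary is actually used in the proof of Theorem~\ref{exists-forall-closed-f1}, where one works relative to a concrete witness $\mathbf{b}_1,\ldots,\mathbf{b}_m$.
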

\begin{remark}
	\hfill
	\label{remark-essential}
	\begin{enumerate}
		\item 
		It is crucial to observe that formulas
		like $I_n(\bf{a})<I_n(\bf{x})<I_n(\bf{b})$ do not force
		$\bf{x}$ to have entries in the critical circle (or the critical rectangle).
		\item 
		Assume that $\bf{t}$ and $\bf{\epsilon}>0$ in $\Group$ are given. 
		Then it is possible to find $\bf{t}'\in f_1(\Group) $ such that $|\bf{t}'-\bf{t}|<\bf{\epsilon}$ and $\bf{t}\not\stackrel{n}{\equiv} \bf{t}'$.
		\label{epsilon}
	\end{enumerate}
\end{remark}

\begin{thm}
	\label{exists-forall-closed-f1}
	$ f_1(\Group)
	$
	is
	$ \exists\forall $-closed in 
	$ \Group $ in 
	$L_{\text{oag}}$.
\end{thm}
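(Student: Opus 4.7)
The plan is to verify the definition of $\exists\forall$-closedness directly. Let $\phi(\bar{y}) = \exists \bar{x}\, \chi(\bar{x}, \bar{y})$ be an $\exists\forall$-formula in $L_{\text{oag}}$ and $\bar{a} \in f_1(\Group)$ with $\Group \models \phi(\bar{a})$. By Corollary \ref{exist-forall} I may assume $\chi = \bigvee_k \bigwedge_t \psi_{k,t}$ with each $\psi_{k,t}$ a negation of an atomic $L''_{\text{oag}}$-formula. Fixing witnesses $\bar{b} \in \Group$ and an index $k$ with $\Group \models \bigwedge_t \psi_{k,t}(\bar{b}, \bar{a})$, I would next apply Corollaries \ref{cor-general-neg-Rphi} and \ref{crucial-natije-r-phi} to replace each $\neg R_\varphi$ conjunct, thereby reducing the conjunction to one consisting only of non-congruences $\star \not\stackrel{n}{\equiv} \star$, simple inequalities and disequalities among the $x_i$, and interval conditions $\mathbf{c} < x_i < \mathbf{d}$ with $\mathbf{c}, \mathbf{d} \in f_1(\Group)$.

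The heart of the argument is a perturbation step. Let $N$ be the least common multiple of all moduli $n$ appearing. The plan is to replace each $b_i$ by a nearby $b_i' \in f_1(\Group)$ with $b_i' \stackrel{N}{\equiv} b_i$, so that every non-congruence involving $b_i$ is preserved, and with $|b_i' - b_i|$ small enough that every interval condition $\mathbf{c} < b_i < \mathbf{d}$ is retained. Such perturbations are available by the infinitesimal layering of the squares $\mathbb{Z}[c_1, c_2, \ldots]$: for $j$ large, $N c_j$ is a positive element of $f_1(\Group)$ smaller than any preassigned positive quantity; by using such elements at well-separated scales, finitely many $b_i$ can be adjusted simultaneously while also maintaining the orderings and disequalities among them. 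This is the congruence-preserving counterpart of Remark \ref{remark-essential}(2).

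The main obstacle will be guaranteeing that each interval $(\mathbf{c}, \mathbf{d})$ with $\mathbf{c}, \mathbf{d} \in f_1(\Group)$ admits \emph{any} witness inside $f_1(\Group)$. This is exactly the step that breaks down in the original construction of \cite{main}, as exhibited in Corollary \ref{f-baste-vojodi}(3): one can produce an interval that forces the witness to have a non-zero entry at the critical circle, and hence to lie outside $f_1(\group)$. The whole point of the modification carried out in Section \ref{key} is to defeat this phenomenon, and the key fact is recorded in Remark \ref{remark-essential}(1): in our $\Group$, no condition of the form $I_n(\mathbf{c}) < I_n(x) < I_n(\mathbf{d})$ forces $x$ to be non-zero at a critical position. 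Once this obstacle is overcome, the perturbation above delivers $\bar{b}' \in f_1(\Group)$ with $f_1(\Group) \models \bigwedge_t \psi_{k,t}(\bar{b}', \bar{a})$, and hence $f_1(\Group) \models \phi(\bar{a})$.
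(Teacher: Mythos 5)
Your reduction to the normal form of Corollary \ref{exist-forall} and the elimination of the $\neg R_{\varphi}$ conjuncts via Corollaries \ref{cor-general-neg-Rphi} and \ref{crucial-natije-r-phi} match the paper. The gap is in your perturbation step. You propose to move the \emph{original} witnesses $\mathbf{b}_i\in\Group$ into $f_1(\Group)$ by shifts that are simultaneously (i) divisible by $N$, (ii) arbitrarily small, and (iii) land in $f_1(\Group)$. Conditions (ii) and (iii) are incompatible in general: $f_1(\Group)$ is exactly the set of elements vanishing at the critical circle, so if some $\mathbf{b}_i$ has a nonzero entry $q$ at that circle, every $\mathbf{b}_i'\in f_1(\Group)$ differs from $\mathbf{b}_i$ by an element whose leading nonzero coordinate sits at (or before) the critical circle. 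Such a difference has a fixed archimedean magnitude; it cannot be made smaller than the slack in an order condition $\sum m_i x_i\leq \mathbf{a}$ whose slack lives in a later archimedean class. The elements $Nc_j$ you invoke live inside a single square and cannot cancel a circle coordinate. Note also that Remark \ref{remark-essential}(2) is a congruence-\emph{breaking} statement (it produces $\mathbf{t}'$ with $\mathbf{t}\not\stackrel{n}{\equiv}\mathbf{t}'$), so it is not the tool your congruence-preserving perturbation needs.

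The paper avoids this by a two-stage argument that you are missing. It does not perturb the original witnesses at all: since $f_1(\Group)$ is $\exists$-closed in $\Group$ (the convexity and direct-sum argument behind Corollary \ref{f-baste-vojodi}(1), which carries over to $\Group$), the conjunction of the order conditions \eqref{eq-sum-leq} --- an existential condition with parameters in $f_1(\Group)$ --- already has \emph{fresh} witnesses $\mathbf{c}_1,\ldots,\mathbf{c}_m$ inside $f_1(\Group)$. Only then does one perturb these $\mathbf{c}_i$, by shifts that really are small because the $\mathbf{c}_i$ already vanish at the critical circle, so as to arrange the non-congruences \eqref{eq-sum-not-equiv} while keeping the inequalities with an $\epsilon$ of slack. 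Your appeal to Remark \ref{remark-essential}(1) to defeat the obstruction of Corollary \ref{f-baste-vojodi}(3) is the right intuition for why $\exists\forall$-closedness can hold for the modified groups, but as written your single perturbation cannot simultaneously secure the order conditions and membership in $f_1(\Group)$.
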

\begin{proof}
	Let $\phi(\bar{x},\bar{y},\bar{z})$ be a quantifier-free $ L_{\text{oag}}$-formula. We have to show that if the formula $  \exists\bar{x}\ \forall \bar{y}\ \phi(\bar{x},\bar{y},\bar{\bf{a}}) $ 
	with $ \bar{\bf{a}}\in  (f_1(\Group)) ^{| \bar{\bf{a}}|} $
	is satisfied in $\Group$, then it is also 
	satisfied in $f_1(\Group)$.
	By Lemma \ref{exist-forall},  the formula 
	$ \exists\bar{x}\ \forall \bar{y}\ \phi(\bar{x},\bar{y},\bar{\bf{a}}) $ is equivalent to $ \exists \bar{x} \ {\bigvee}_{k}{\bigwedge}_{t} \psi_{k,t} (\bar{x},\bar{\bf{a}})$,  where each $ \psi_{k,t} $ is the negation of an atomic formula in  $ L''_{\text{oag}} $. We need only to show that for each $ k $, if the formula $\chi $ of the form $\exists \bar{x} \  {\bigwedge}_t \psi_{k,t}(\bar{x},\bar{\bf{a}})$ holds in  $\Group$, then it holds in $f_1(\Group)$.
	Note that  $ \psi_{k,t} (\bar{x},\bar{\bf{a}}) $ can be one of the following forms:
	\begin{align}
		\label{r-phi}
		&\neg R_{\varphi},\text{ for some $\varphi$  of the form \eqref{r-phi-exists-u}}, \\
		&\sum m_ix_i\leq \bf{a}.\label{eq-sum-leq}\\
		&\sum m_ix_i\not\stackrel{2}{\equiv} \bf{a}, \label{eq-sum-not-equiv}
	\end{align}
 
	Therefore, the general form of $\chi$ is as follows:
	\begin{align}
		\label{general-form-of-formula}
		\exists x_1, \cdots, x_m \  \big(\bigwedge_{\varphi \in S}\neg R_{\varphi} \wedge \bigwedge_{\theta\in \varTheta_1}\theta \wedge\bigwedge_{\theta\in \varTheta_2} \theta \big)\end{align}
	where 
	$S$ is a finite set of formulas of form \eqref{r-phi},
	$\varTheta_1$ and $\varTheta_2$  are finite sets 
	consisting, respectively, of formulas of the form 	\eqref{eq-sum-leq}, and \eqref{eq-sum-not-equiv}.
Now assume that $\bf{b}_1,\ldots,\bf{b}_m$ are elements in 
$\Group$, that witness the existential quantifier in 
formula \eqref{general-form-of-formula}. By Corollary~\ref{crucial-natije-r-phi}, each $\neg R_{\varphi}$ can be rewritten as formulas in $\varTheta_1$ and $\varTheta_2$, so it suffices to consider only formulas in $\varTheta_1$ and $\varTheta_2$.

The group
	$ f_1(\Group) $
	is $\exists$-closed in 
	$\Group $, hence 
	there exist elements
	$\bf{c}_1,\ldots,\bf{c}_m$  in $f_1(\Group)$ that satisfy 
	$\bigwedge_{\theta\in \varTheta_1} \theta$.

	We claim that it is possible to alter each $\bf{c}_1,\ldots, \bf{c}_m$ so as to also satisfy
	$\bigwedge_{\theta\in \varTheta_2}\theta$.
	First  we find $\epsilon\in f_1(\Group)$ 
	such that for each inequality  $\sum m_ix_i \leq \bf{a}$ in $\varTheta_1$, we have
	$\sum m_ix_i \leq \bf{a}-\epsilon$.  Then we change $\bf{c}_i$'s to $\bf{c}'_i$'s using item
	\ref{epsilon} in the remark above,
	such that 
	$\sum m_i\bf{c}'_i \leq \bf{a}-\epsilon$ for each such inequality, and at the same time $\bf{c}'_i$'s satisfy the
	non-congruence equations involved in $\varTheta_2$. 
	\par 
	Finally note that we may assume that no equality sign has appeared in formulas in $\varTheta_1$, since each equality sign leads to writing some variables in terms of the others and reducing the number of variables. 
\end{proof}
\begin{thm}
	$ f_2(\Group)
	$
	is
	$ \exists\forall $-closed in 
	$ \Group $ in 
	$L_{\text{oag}}$.
\end{thm}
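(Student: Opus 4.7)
The plan is to mirror the proof of Theorem~\ref{exists-forall-closed-f1} with $f_1$ replaced by $f_2$ throughout. Starting from a formula $\exists\bar{x}\,\forall\bar{y}\,\phi(\bar{x},\bar{y},\bar{\bf{a}})$ with parameters $\bar{\bf{a}}\in f_2(\Group)^{|\bar{\bf{a}}|}$ that holds in $\Group$, I would apply Corollary~\ref{exist-forall} to rewrite it in normal form as $\exists\bar{x}\,\bigvee_k\bigwedge_t\psi_{k,t}$ where each $\psi_{k,t}$ is the negation of an $L''_{\text{oag}}$-atomic formula, and reduce to showing that a single satisfiable disjunct $\chi=\exists\bar{x}\,\bigwedge_t\psi_{k,t}$ admits witnesses in $f_2(\Group)$. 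As in the $f_1$ case, each $\psi_{k,t}$ takes one of the three forms \eqref{r-phi}, \eqref{eq-sum-leq}, or \eqref{eq-sum-not-equiv}, and $\chi$ has the general shape \eqref{general-form-of-formula}.

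The crux is to establish analogs of Observation~\ref{index-lemma}, Corollary~\ref{crucial-natije-r-phi}, and Remark~\ref{remark-essential} with $f_1$ replaced by $f_2$. Since $f_2$ merely shifts shapes while omitting a single block of squares from its image, and since each remaining square $\mathbb{Z}[c_1,c_2,\ldots]$ contains cofinally many elements of every $n$-divisibility class at every relevant index, any $\bf{a}$ for which $(I_n(\bf{a}),J_n(\bf{a}))$ is defined can be bracketed by $\bf{c},\bf{d}\in f_2(\Group)$ that pin down $(I_n,J_n)$ on the open interval, yielding Observation~\ref{index-lemma} and therefore Corollary~\ref{crucial-natije-r-phi} for $f_2$. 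Once this holds, each $\neg R_{\varphi}$ in $\chi$ rewrites as a conjunction of $f_2$-bounded inequalities together with non-congruence equations, reducing $\chi$ to a formula involving only $\varTheta_1$-type inequalities and $\varTheta_2$-type non-congruences. Using that $f_2(\Group)$ is $\exists$-closed in $\Group$, we obtain witnesses $\bar{\bf{c}}\in f_2(\Group)$ for the inequalities in $\varTheta_1$; then the $f_2$-analog of Remark~\ref{remark-essential}(\ref{epsilon}) lets us perturb $\bar{\bf{c}}$ by amounts smaller than a suitable slack $\epsilon\in f_2(\Group)$ to $\bar{\bf{c}}'\in f_2(\Group)$ that additionally satisfies every non-congruence in $\varTheta_2$ without breaking the inequalities.

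The main obstacle will be verifying the analog of Remark~\ref{remark-essential}(\ref{epsilon}) for $f_2$: given $\bf{t}\in\Group$ and $\bf{\epsilon}>0$, producing $\bf{t}'\in f_2(\Group)$ with $|\bf{t}'-\bf{t}|<\bf{\epsilon}$ and $\bf{t}\not\stackrel{n}{\equiv}\bf{t}'$. The delicate case is when $\bf{t}$ has coordinates in or near the critical block of squares which $f_2$ excises, so that $\bf{t}$ has no direct preimage at that position. The resolution exploits the archimedean stratification of $\mathbb{Z}[c_1,c_2,\ldots]$: each square retained in $f_2(\Group)$ contains arbitrarily small transcendental generators $c_i$ of both $n$-divisibility classes, so by adding a sufficiently small such generator drawn from a non-critical square one can simultaneously shift $\bf{t}$ into $f_2(\Group)$ and alter its residue modulo $n$. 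With this verification in place, the closing observation from the $f_1$ proof — that equalities in $\varTheta_1$ may be eliminated by reducing the number of variables — carries over unchanged, completing the argument.
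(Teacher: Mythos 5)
Your proposal follows the same route as the paper: isolate the one new ingredient — that for parameters $\mathbf{a},\mathbf{b}\in f_2(\Group)$ any index inequality $I_n(\mathbf{a})<I_n(x)<I_n(\mathbf{b})$ solvable in $\Group$ is already solvable in $f_2(\Group)$, so the $f_2$-analogues of Observation~\ref{index-lemma} and Corollary~\ref{crucial-natije-r-phi} hold — and then run the argument of Theorem~\ref{exists-forall-closed-f1} verbatim ($\exists$-closedness for the inequalities, the $\epsilon$-perturbation of Remark~\ref{remark-essential} for the non-congruences). This is exactly the paper's (much terser) proof, with the reduction spelled out in more detail.
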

\begin{proof}
For any \(\mathbf{a}, \mathbf{b} \in f_2(\Group)\), whenever the inequality
$ I_n(\mathbf{a}) < I_n(x) < I_n(\mathbf{b})$
has a solution in $\Group $, it also has one inside $f_2(\Group)$. This means, just like in the proof of Theorem 4.18, we can replace all formulas of the form $\neg R_{\varphi}$ with appropriate formulas of the types \eqref{eq-sum-leq} and \eqref{eq-sum-not-equiv}. From this point onward, the proof proceeds by essentially the same argument employed in \Cref{exists-forall-closed-f1}.

\end{proof}

   \section{Final step}
   \label{final-step}
   Throughout this section, we let $k$  be a pseudo algebraically closed (PAC) field 
   whose algebraic part is not algebraically closed. For an example of such a field
   see \cite{main}.
   Also let
   $\Lambda_1$ and $ \Lambda_2$ be the ordered abelian groups introduced in the previous section. Also consider the field of Hahn Series
   $\mathcal{K} = k((\Lambda_1))((\Lambda_2))$, equipped with the
   $t$-adic valuation, whose value group, residue field, and valuation ring, respectively, are
   $\Lambda_2$,
   $k((\Lambda_1))$, and
   $A= k((\Lambda_1))[[\Lambda_2]]$. As our main result, in Subsection  \ref{Definability-of-A} we show that $A$ is definable without parameters and in Subsection
   \ref{The-impossibility-of-an-exists-forall-exists-definition-for-A}
    we show that it is not possible
   to define $A$ with an $\exists\forall\exists$-formula. The proof of the fact that
   $A$ is also not $\forall\exists\forall$-definable is similar.
   \subsection{Definability of $A$}
   \label{Definability-of-A}
   First note that $\mathcal{K}=k((\Lambda_1))((\Lambda_2))$ is isomorphic to the valued field
   $k((\Group))$.  By Theorem~2.1 in \cite{fehm}, the valuation ring $k[[\Group]]$ is $\exists$-definable in $\mathcal{K}$.
  It is straightforward to verify that
   $
   A = k[[\Group]] \cup k((\Lambda_1))$.
   Therefore, to show the definability of $A$, it is enough to show that $k((\Lambda_1))$ is a definable subset of $\mathcal{K}$ in ${L_\text{ring}}$.
\par 
  As proved in the previous section, the subgroup $\Lambda_1$ is definable in $ \Lambda_2 \oplus \Lambda_1$ in    $L_{\text{oag}}$.
   Our goal is to translate this definability into the $L_{\text{ring}}$-definability of $k((\Lambda_1))$ as a subset of $k((\Group))$. 
    This translation works as explained below.
    \par 
    Note that $k((\Lambda_1))=\{f\in k((\Group)): v(f)\in \Lambda_1\}$. The relation $v(f)\in \Lambda_1$ is definable
    in $\Group$ by an $L_{\text{oag}}$-formula. Atomic sub-formulas of such a formula  are instances of $v(f_1)<v(f_2)$ and $v(f_1)+v(f_2)=v(f_3)$ for
    $f_i\in k((\Group))$.
    The relation $v(f_1)<v(f_2)$ is definable in $L_{\text{ring}}$, since it means $\frac{f_1}{f_2}\in k[[\Group]]$.
     The relation  $v(f_1)+v(f_2)=v(f_3)$ can be written as $v(f_1f_2)=v(f_3)$, that is $\neg( v(f_1f_2)<v(f_3))\wedge\neg( v(f_1f_2)>v(f_3))$.
 \subsection{The impossibility of an $ \exists\forall\exists$ and 
 	$\forall\exists\forall$-definition for $A$}  				
          	\label{The-impossibility-of-an-exists-forall-exists-definition-for-A}  				
          In the previous subsection, we proved that the valuation ring $A$ of the valued field $\mathcal{K}$ is definable in  $L_{\text{ring}}$.  
          In this subsection, we show that even so, it is \textbf{not} definable by any $\exists\forall\exists$ formula. 
          In a very similar way, one can show that $A$ is also not definable by any $\forall\exists\forall$-formula.
          \par 
          Recall that
           by Lemma~\ref{general form of prestel theorem}, the valuation ring $A$ is definable by an $\exists\forall\exists$-formula if and only if the following condition holds:
          \begin{quote}
          If $h: \mathcal{K} \to \mathcal{K}$ is an $\exists\forall$-embedding, then $h(A) \subseteq A$.
          \end{quote}
          We claim that there is an $\exists\forall$-embedding $h:\mathcal{K}\to \mathcal{K}$ such that $h(A)\not\subseteq A$.
          By Theorem~\ref{ax- asli} to find $h$ it suffices to find $\exists\forall$-embeddings 
          $f: \Group\to \Group$ and $g:k\to k$. We let $f$ be the $\exists\forall$-embedding $f_1$
          constructed in Subsection \ref{Recalling-the-construction-in} and $g$ be the identity, and also let $h:\mathcal{K}\to \mathcal{K}$ be the embedding obtained
          by these two maps and application of Theorem~\ref{ax- asli}.
It is clear that
\[
h(A)=k((f(\Lambda_1)))[[f(\Lambda_2)]].
\]
By construction 
$f(\Lambda_1)\not\subseteq \Lambda_1$, and hence $h(A)\not\subseteq A$.
\paragraph{Acknowledgment.}
We would like to thank Arno Fehm for initially suggesting the problem to the master’s students Yadegari and Shirani. As the problem developed further, the two additional authors also joined the project.

   	   	 \end{document}